\newtheorem{thm}{Theorem}[section]
\newtheorem{prop}[thm]{Proposition}
\newtheorem{lemma}[thm]{Lemma}
\newtheorem{cor}[thm]{Corollary}
\newtheorem{defn}[thm]{Definition}
\newtheorem{lem-defn}[thm]{Lemma/Definition}
\newtheorem{example}[thm]{Example}
\newtheorem{remark}[thm]{Remark}
\numberwithin{equation}{section}
 \numberwithin{equation}{section}
\newcommand{\C}{{\mathbb C}}
\newcommand{\Z}{{\mathbb Z}}
\renewcommand{\P}{{\mathbb P}}
\newcommand{\bull}{{\scriptscriptstyle \bullet}}
\newcommand{\la}{\lambda}
\newcommand{\cO}{{\mathcal O}}
\newcommand{\A}{{\lambda}}
\newcommand{\B}{{\mu}}
\newcommand{\sC}{{\nu}}
\newcommand{\sym}{{\mathpzc{S}}}
\newcommand{\comment}[1]{}
\newcommand{\type}{\text{type}}
\newcommand{\bigbrace}[2]{\genfrac{\{}{\}}{0pt}{}{#1}{#2}}
\renewcommand{\emptyset}{\varnothing}
\renewcommand{\tilde}{\widetilde}
\DeclareFontFamily{OT1}{pzc}{}
\DeclareFontShape{OT1}{pzc}{m}{it}%
              {<-> s * [0.900] pzcmi7t}{}
\DeclareMathAlphabet{\mathpzc}{OT1}{pzc}%
                                 {m}{it}
\DeclareMathAlphabet      {\mathsf}{OT1}{cmss}{m}{n}
\definecolor{darkgreen}{rgb}{0,0.5,0}
\DeclareRobustCommand\widecheck[1]{{\mathpalette\@widecheck{#1}}}
\def\@widecheck#1#2{%
   \setbox\z@\hbox{\m@th$#1#2$}%
   \setbox\tw@\hbox{\m@th$#1%
      \widehat{%
         \vrule\@width\z@\@height\ht\z@
         \vrule\@height\z@\@width\wd\z@}$}%
   \dp\tw@-\ht\z@
   \@tempdima\ht\z@ \advance\@tempdima2\ht\tw@ \divide\@tempdima\thr@@
   \setbox\tw@\hbox{%
      \raise\@tempdima\hbox{\scalebox{1}[-1]{\lower\@tempdima\box\tw@}}}%
   {\ooalign{\box\tw@ \cr \box\z@}}}
\begin{document}{\allowdisplaybreaks[4]

\title[Equivariant Pieri rules for isotropic Grassmannians]{Equivariant Pieri rules for isotropic Grassmannians}

\author{Changzheng Li}
\address{Center for Geometry and Physics, Institute for Basic Science (IBS), Pohang 790-784, Republic of   Korea}
\email{czli@ibs.re.kr}

\author{Vijay Ravikumar}
\address{Chennai Mathematical Institute (CMI),
H1 SIPCOT IT Park, Siruseri, Kelambakkam, India}
\email{vijayr@cmi.ac.in}
\thanks{ 
 }

\date{
      }




\begin{abstract}
We give a Pieri rule for the torus-equivariant cohomology of (submaximal) Grassmannians of Lie types  $B$, $C$, and $D$.
To the authors' best knowledge, our rule is the first manifestly positive formula, beyond the equivariant Chevalley formula. We also give a simple proof of the equivariant Pieri rule for the ordinary (type $A$) Grassmannian.
\end{abstract}

\maketitle
\tableofcontents

\section{Introduction}\label{S:intro}

Let $V$ be an $N$-dimensional  complex vector space  equipped with a symmetric or skew-symmetric  bilinear form $\omega$, which can be either trivial or non-degenerate. The Grassmannians $IG_{\omega}(m, N)$ of classical Lie type parameterize $m$-dimensional isotropic vector subspaces of $V$. The cohomology ring of an isotropic Grassmannian $X=IG_{\omega}(m, N)$, or more generally of a homogeneous variety, has an additive basis of Schubert classes represented by Schubert subvarieties $X_{\A}$. One of the central problems of Schubert calculus is to find a manifestly positive formula for the structure constants of the cup product of two Schubert cohomology classes, or equivalently, for the triple intersection numbers of three Schubert subvarieties in general position. Such a positive formula, called a Littlewood-Richardson rule, has deep connections to various subjects, including geometry, combinatorics and representation theory.

An isotropic Grassmannian $X$ can be written as a quotient of a classical complex simple Lie group $G$ by a maximal parabolic subgroup $P$ (with two notable exceptions of Lie type $D_n$, described in Section \ref{S:type_d}). Fix a choice of maximal complex torus $T$ and a Borel subgroup $B$ with $T\subset B\subset P$.   The Schubert varieties $X_{\lambda}$ (relative to $B$) are closures of $B$-orbits, and hence are $T$-stable. They give a basis $[X_{\A}]^T$ for the $T$-equivariant cohomology $H^*_T(X)$ as a $H^*_T(\mbox{pt})$-module.
The structure coefficients $N^{\sC}_{\A, \B}$ in the equivariant product,
\[ [X_{\A}]^T\cdot [X_{\B}]^T=\sum_{\sC}N^{\sC}_{\A, \B} [X_{\sC}]^T,\]
are homogeneous polynomials which satisfy a positivity condition conjectured by Peterson \cite{Peterson} and proved by Graham \cite{Grah}.  In particular, they are \emph{Graham-positive}, meaning they are polynomials in the negative simple roots,  with nonnegative integer coefficients.  These equivariant structure coefficients carry much more information than the triple intersection numbers of Schubert varieties, and are more challenging to study.
When the bilinear form $\omega$ is trivial, i.e., when $X=Gr(m, N)$ is a type $A$ Grassmannian, there has been extensive work on equivariant Littlewood-Richardson rules  \cite{KnutTao, Molev, Krei, ThYo}.
However, for Grassmannians of Lie type other than $A$,  there have been no  manifestly Graham-positive formulas, to the authors' best knowledge, except for the equivariant Chevalley formula which concerns multiplication by Schubert divisors  (\cite{koku,Brion}, see also e.g.   \cite[Theorem 11.1.7 (i)]{kumar}). We remark that an effective (but non-positive) algorithm for computing the structure coefficients for general $G/P$ is given in \cite{mih} (see also \cite{MoSa,KnutTao} for the type $A$ case).  Remarkably, a manifestly positive equivariant Littlewood-Richardson rule has recently been given by Buch  for two-step partial flag varieties of type $A$ \cite{Buch-equivTwostep}.

In the present paper, we give for the first time an \emph{equivariant Pieri rule} for Grassmannians of Lie types $B$, $C$, and $D$, as well as a new proof of the Pieri rule in type $A$.
Such a rule concerns products with the special Schubert classes $[X_p]^T$, which are related to the equivariant Chern classes of the tautological quotient bundle, and generate the $T$-equivariant cohomology ring.  Using geometric methods, we give a manifestly positive formula for the structure coefficients $N_{\A, p}^{\B}$ of the equivariant multiplication $[X_{\A}]^T\cdot[X_p]^T$.
For type $A$ Grassmannians $X=Gr(m, N)$, the equivariant Pieri rule has been even more extensively studied than the more general equivariant Littlewood-Richardson rule (see e.g. \cite{MoSa-Pieri,LaRaSa,Laksov,GaSa,Fun}).
Nevertheless,   we give a new  proof that  reveals an interesting reduction of arbitrary Pieri coefficients to much simpler ones.
Namely, we prove that any Pieri coefficient $N^\B_{\A,p}(Gr(m, N))$ is equal to a Pieri coefficient of the form $N^\sC_{\sC, p'}(Gr(m',N))$
in a possibly different Grassmannian $Gr(m', N)$. Such a reduction has also been made in \cite{huangli} and \cite{rob} in a combinatorial way, but our argument is much simpler and can explain its geometric origin.
Each    coefficient  $N^\sC_{\sC, p'}(Gr(m',N))$
is  the restriction of a special equivariant   Schubert class $[X_{p'}]^T$ to a $T$-fixed point of $Gr(m',N)$.
There have been several manifestly positive formulas for these \emph{restriction coefficients} (see \cite{koku, AJS, Billey} for general $G/P$; \cite{Ikeda,IkNa} for Lagrangian and maximal orthogonal Grassmannians; and for example \cite{buch_rimanyi} for type $A$ Grassmannians).
For  completeness, we include one more restriction formula in the appendix.
For isotropic Grassmannians $X=IG_{\omega}(m,N)$ of Lie types $B$ and $C$, we use geometric arguments to reduce  the Pieri coefficients $N^\B_{\A,p}(X)$ to sums of specializations of restriction coefficients $N^\sC_{\sC, p'}(Gr(m',N))$.
In the type $D$ case, we succeed in a similar way for most of the Pieri coefficients, and can reduce the rest to appropriate  restriction coefficients $N_{\sC, p'}^{\sC}(X')$ with respect to an isotropic Grassmannian $X'$ of type $D$.
We remark that in the case of the complete flag variety of Lie type $A$, an equivariant Pieri rule with respect to a distinct set of special Schubert classes is contained in  \cite{lamshi22}.

To state our formula precisely, we will parametrize Schubert varieties by \textit{Schubert symbols} (also called \textit{index sets} in \cite{BKT2}, or \textit{jump sequences}), which gives a uniform description    for all classical Lie types. Schubert symbols for a Grassmannian   $IG_\omega(m, N)$ are subsets  $\A=\{\lambda_1<\lambda_2<\cdots<\lambda_m\}$ of the integer interval $[1,N]$ which in addition satisfy $\lambda_i+\lambda_j\neq N+1$ for all $i, j$ if $\omega$ is non-degenerate.
We denote by $|\A|$ the codimension of the Schubert variety $X_{\A}$ in $X$.
We also need the combinatorial relation $\A\to\B$ between Schubert symbols $\A$ and $\B$,
which says roughly that the cohomology class $[X_\B]$ occurs in some cohomological Pieri product involving $[X_{\A}]$.
Let us now restrict our attention to a Grassmannian of type $C_n$; i.e. we let $N=2n$  and $\omega$ be non-degenerate and  skew-symmetric. We adopt the notation $SG(m,2n)$ to refer to this symplectic Grassmannian.
Given $\A \to \B$, the pair $(\A, \B)$  defines two combinatorial sets $\mathcal{L}_{\A, \B}$ and $\mathcal{Q}_{\A, \B}$ (see Sections \ref{S:type_a} - \ref{S:type_d}  for precise descriptions), which index certain hyperplanes  and quadratic hypersurfaces in $\mathbb{P}^{2n-1}$, respectively.
We  adopt the Bourbaki \cite{Bour} expression for simple roots $\alpha_i$ (resp. $\hat\alpha_j$) of type $C_n$ (resp. $A_{2n-1}$) in terms of weights: $\alpha_n=2t_n$ and $\alpha_i=t_{i}-t_{i+1}$ for $1\leq i\leq n-1$ (resp. $\hat \alpha_j=\hat t_j-\hat t_{j+1}$ for $1\leq j\leq 2n-1$).  The inclusion of $T$ into a maximal complex torus of $GL(N,\C)\supset G$ induces a ring homomorphism
 $F: \Z[\hat t_1, \ldots, \hat t_{2n}] \to \Z[t_1, \ldots, t_{n}]$, defined by $F(\hat t_j)= t_j$ if $j\leq n$ and $F(\hat t_j)= -t_{2n+1-j}$ otherwise. Conveniently, the homomorphism $F$ sends simple roots of type $A_{2n-1}$ to simple roots of type $C_n$.
 It follows that $F$ sends Graham-positive polynomials of type $A_{2n-1}$ to Graham-positive polynomials of type $C_n$.  Using the specialization $F$, we therefore have

\begin{thm}[Equivariant Pieri rule for $SG(m, 2n)$]\label{T:intro}
Let $\A$ be a Schubert symbol for $SG(m, 2n)$ and   $1\leq p\leq 2n-m$. In   $H^*_T(SG(m, 2n))$,   we have
$$[X_{\A}]^T\cdot [X_{p}]^T=\sum N^{\B}_{\A, p} [X_{\B}]^T,$$
   where the sum is  over   Schubert symbols $\B$ satisfying $\A\to \B$ and {\upshape $|\B|\leq p + |\A|$}. Furthermore, each coefficient $N^{\B}_{\A,p}$ is a sum of $2^{\#\mathcal{Q}_{\A,\B}}$ specializations of restriction coefficients for the type $A$ Grassmannian $Gr(m', 2n)$:
\[N^\B_{\A,p}(SG(m,2n)) = \sum_{\mathcal{I}\subset \mathcal{Q}_{\A, \B}} F\left(N^{\sC_{\mathcal{I}}}_{\sC_{\mathcal{I}},p'}(Gr(m',2n))\right),\]
where $m' = m + |\B| - |\A|$, $p' = p + |\A| - |\B|$,
and each $\sC_{\mathcal{I}}$ is an associated  Schubert symbol for $Gr(m',2n)$ (defined  explicitly in Section \ref{S:type_c}).
\end{thm}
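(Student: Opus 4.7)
The plan is to realize each Pieri coefficient $N^\B_{\A, p}$ as an equivariant intersection computation on $SG(m, 2n)$, reduce it by a geometric argument to a family of local contributions indexed by subsets $\mathcal{I} \subset \mathcal{Q}_{\A,\B}$, and identify each local contribution with a restriction coefficient on the larger type $A$ Grassmannian $Gr(m',2n)$. Finally, one applies the specialization $F$, which carries type $A_{2n-1}$ simple roots to type $C_n$ simple roots and hence preserves Graham-positivity.

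Concretely, I would first interpret $[X_p]^T$ as an incidence class: relative to a reference isotropic flag, $X_p$ is the locus of isotropic $m$-planes meeting a fixed linear subspace in excess dimension. Intersecting $X_{\A}$ with a generic translate of $X_p$ and applying equivariant pushforward/localization expresses $N^\B_{\A,p}$ as a sum of contributions at the $T$-fixed points lying in the intersection. The indexing condition $\A \to \B$ and the codimension bound $|\B| \leq p + |\A|$ appear naturally as support and dimension restrictions on the nonzero contributions.

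The key geometric step is to describe the intersection scheme, in an affine neighborhood of the relevant $T$-fixed point, as a projective configuration in $\P^{2n-1}$. The Schubert conditions defining $X_{\B}$ together with the Pieri condition decompose into linear cuts (indexed by $\mathcal{L}_{\A,\B}$) and quadric cuts (indexed by $\mathcal{Q}_{\A,\B}$), the latter arising from the isotropy condition imposed by the symplectic form $\omega$. I would then show that each quadric $Q \in \mathcal{Q}_{\A,\B}$ degenerates, in a $T$-equivariant manner, into a union of two hyperplanes, so that picking a branch for each $Q$ corresponds to choosing a subset $\mathcal{I} \subset \mathcal{Q}_{\A,\B}$. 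Once all quadrics are resolved, the resulting linear configuration is precisely that cut out by a Schubert symbol $\sC_{\mathcal{I}}$ inside $Gr(m',2n)$, where $m' = m + |\B| - |\A|$ and $p' = p + |\A| - |\B|$. The local contribution at the fixed point then equals the restriction coefficient $N^{\sC_{\mathcal{I}}}_{\sC_{\mathcal{I}}, p'}(Gr(m',2n))$, and summing over the $2^{\#\mathcal{Q}_{\A,\B}}$ subsets $\mathcal{I}$ yields the stated formula. Applying $F$ converts the type $A$ equivariant weights into the corresponding type $C$ weights and preserves Graham-positivity.

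The hard part will be establishing the geometric dictionary in the key step: namely, identifying precisely which quadrics arise from $\omega$, producing the explicit Schubert symbols $\sC_{\mathcal{I}}$ for $Gr(m',2n)$, and verifying that the $T$-equivariant splitting of each quadric into two hyperplanes matches the combinatorial sum over $\mathcal{I}$. This requires a careful affine chart description of $SG(m,2n)$ around each $T$-fixed point and a bookkeeping of how the isotropy equations interact with the Schubert conditions. Once this dictionary is in place, the identification of each local contribution with a restriction coefficient on $Gr(m',2n)$ and the final specialization under $F$ are essentially formal.
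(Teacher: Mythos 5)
Your key step --- splitting each quadric class $2\zeta$ into the two hyperplane classes $(\zeta+t_c)+(\zeta-t_c)$, indexing the resulting linear configurations by subsets $\mathcal{I}\subset\mathcal{Q}_{\A,\B}$, identifying each with a linear subvariety cut out by a Schubert symbol $\sC_{\mathcal{I}}$ for $Gr(m',2n)$, and then specializing via $F$ --- is exactly the content of Lemma \ref{L:type_C_degeneration} and the second half of the proof of Theorem \ref{T:type_c_reduction}. So the heart of your argument is sound and coincides with the paper's.

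The gap is in how you propose to get to $\P^{2n-1}$ in the first place. You suggest intersecting $X_\A$ with a generic translate of $X_p$ and ``applying equivariant pushforward/localization'' at $T$-fixed points, then describing the intersection ``in an affine neighborhood of the relevant $T$-fixed point'' as a configuration in $\P^{2n-1}$. Neither mechanism works as stated: localization on $SG(m,2n)$ produces signed rational contributions with Euler-class denominators, which destroys manifest positivity and does not produce the sets $\mathcal{L}_{\A,\B}$ and $\mathcal{Q}_{\A,\B}$; and an affine chart of $SG(m,2n)$ around a fixed point is not a projective configuration in $\P^{2n-1}$. The actual bridge is the correspondence $SG(m,2n)\xleftarrow{\pi} IF_\omega(1,m;2n)\xrightarrow{\psi}\P^{2n-1}$ together with the projection formula: since $X_p=\pi\big(\psi^{-1}(\P(E_{n_p}))\big)$, one has $N^\B_{\A,p}=\int^T_{\P^{2n-1}}\psi_*[\pi^{-1}(Y_{\A,\B})]^T\cdot[\P(E_{n_p})]^T$, and the crucial input from \cite{BKT2} is that $\psi$ restricted to $\pi^{-1}(Y_{\A,\B})$ is birational onto the projected Richardson variety $Z_{\A,\B}$ exactly when $\A\to\B$, and has positive-dimensional fibers otherwise (which gives both the vanishing statement and the explicit linear/quadric complete-intersection description of $Z_{\A,\B}$). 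This is Proposition \ref{P:move_to_projective_space}; without it, your reduction to $\P^{2n-1}$ is not established and the rest of the argument has nothing to act on. Two further points you treat as ``essentially formal'' also need the same machinery: the reverse projection formula through $F\ell(1,m';2n)$ is what identifies $\int^T_{\P^{2n-1}}[Z_{\sC_{\mathcal{I}},\sC_{\mathcal{I}}}]^T\cdot[\P(E_{2n+1-p'-m'})]^T$ with the restriction coefficient $N^{\sC_{\mathcal{I}}}_{\sC_{\mathcal{I}},p'}(Gr(m',2n))$, and the compatibility of $\int^{\hat T}_{\P^{2n-1}}$ with $\int^{T}_{\P^{2n-1}}$ under the torus embedding (diagram $(\star)$) is what justifies pulling $F$ outside the integral so that the answer is a specialization of a type $A$ coefficient rather than merely a $T$-equivariant quantity.
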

\noindent In particular if $|\B|= p + |\A|$, then $p'=0$. As a consequence, the coefficient $N^\B_{\A,p}$ is a summation of   $2^{\# \mathcal{Q}_{\A, \B}}$  copies of the constant polynomial $1$. This reproduces  the ordinary Pieri rule of  Buch, Kresch and Tamvakis \cite{BKT2}.
For the odd orthogonal Grassmannian $OG(m, 2n+1)$ (i.e. the type $B$ case),   an equivariant Pieri coefficient  $N_{\A, p}^{\B}(OG(m, 2n+1))$ is generally not  a  multiple of another type $C$ equivariant Pieri coefficient, in contrast to the case of ordinary cohomology \cite{BilleyHai} (see also \cite[section 3.1]{BerSo}). Nevertheless,
we give a manifestly positive Pieri formula for it as well as for the type $D$ Grassmannian $OG(m, 2n)$.   We refer our readers to \textbf{Theorems \ref{T:type_b_reduction}} and  \textbf{\ref{T:type_d_reduction}} for the precise statements.

Our equivariant Pieri rules are obtained by geometric arguments, which include two major steps. Let us consider the    natural projections
 $$  IG_\omega(m, N)\overset{\pi}{\longleftarrow} IF_\omega(1, m; N)\overset{\psi}{\longrightarrow} IG_\omega(1,N),$$
 where $IF_\omega(1, m; N)$ denotes the corresponding two-step isotropic flag variety.
Let $Y_{\A, \B}$ denote the Richardson variety given by the intersection of the Schubert variety $X_\A$ with the opposite Schubert variety labeled by $\B$, let  $Z_{\A, \B}$ be the projected Richardson variety $\psi(\pi^{-1}(Y_{\A, \B}))$, and let $L_{p}$ be a subvariety of $IG_\omega(1,N) \subset \mathbb{P}^{N-1}$ with the property that $X_p = \pi(\psi^{-1}(L_{p}))$.
Finally, for any variety $Y$, let $\int^T_{Y}$ denote the equivariant pushforward along the morphism $Y \to \text{pt}$.
When $X$ is of type $A$ or $C$, the  natural  injection  $\iota: IG_\omega(1, N)\to \mathbb{P}^{N-1}$ is the  identity isomorphism,  and our first step is to write $N_{\A, p}^{\B}$ as the integral of an equivariant cohomology class in $\P^{N-1}$ via the projection formula:
 $$N_{\A, p}^{\B}=\int_X^T[Y_{\A, \B}]^T\cdot [X_p]^T=\int_{\P^{N-1}}^T[Z_{\A, \B}]^T\cdot [L_p]^T.$$
When $X$ is of   type $B$  or $D$, the injection $\iota$ is no longer surjective, but a more  involved analysis still works.
Such a technique has led to a Pieri rule for the ordinary cohomology of isotropic Grassmannians \cite{Sott, BKT2} as well as  triple intersection formulas in K-theoretic Schubert calculus \cite{buchMihalcea,buch_ravikumar, Ravi}. In equivariant cohomology, it  reduces $N^\B_{\A,p}$ to an easier calculation in $H^*_T(\mathbb{P}^{N-1})$. However, any direct computation of $\int_{\P^{N-1}}^T[Z_{\A, \B}]^T\cdot [L_p]^T$ involves sign cancelations, and fails to be manifestly positive.

Our key observation is that the projected Richardson variety $Z_{\A,\B} \subset \P^{N-1}$ can be degenerated into $2^{\#\mathcal{Q}_{\A,\B}}$ linear subvarieties $Z_{\sC_{\mathcal{I}},\sC_{\mathcal{I}}}$, indexed by subsets $\mathcal{I} \subset \mathcal{Q}_{\A,\B}$.  These in turn can be interpreted as projections to $\P^{N-1}$ of Richardson varieties $Y_{\sC_{\mathcal{I}}, \sC_{\mathcal{I}}}$ in a type $A$ Grassmannian $X' := Gr(m',N)$.  Applying the projection formula in the reverse direction, we reduce $\int_{\P^{N-1}}^T[Z_{\A, \B}]^T\cdot [L_p]^T$ to a sum of quantities that are easy to compute, with positivity apparent:
$$\int_{\P^{N-1}}^T[Z_{\A, \B}]^T\cdot [L_{p}]^T=\sum_{\sC_{\mathcal{I}}}\int_{\P^{N-1}}^T[Z_{\sC_{\mathcal{I}}, \sC_{\mathcal{I}}}]^T\cdot [L_{p'}]^T=\sum_{\sC_{\mathcal{I}}}\int_{X'}^T[Y_{\sC_{\mathcal{I}}, \sC_{\mathcal{I}}}]^T\cdot [X_{p'}]^T.$$
In particular, each subvariety $Y_{\sC_{\mathcal{I}}, \sC_{\mathcal{I}}}$
is simply the $T$-fixed point in $X'$ corresponding to the Schubert symbol $\sC_{\mathcal{I}}$, and $\int^T_{X'}[Y_{\sC_{\mathcal{I}}, \sC_{\mathcal{I}}}]^T\cdot [X_{p'}]^T$
is the (specialized) restriction of $[X_{p'}]^T$ to that $T$-fixed point.

In addition to the equivariant Pieri rules, there is another important component to the equivariant Schubert calculus for isotropic Grassmannians,  namely the equivariant Giambelli formulas which express an arbitrary Schubert class as a polynomial in special equivariant Schubert classes or Chern classes (see \cite{LaRaSa,Mihalcea_Giambelli,  IkNa,Wilson,IMN, IkMa, Tamv, TamvWilson} and references therein).
The torus-equivariant cohomology ring of an isotropic Grassmannian  (or more generally of a homogeneous variety) behaves more simply than its ordinary cohomology ring, in the sense that it is essentially determined by the equivariant Chevalley formula of multiplication by divisor classes due to Mihalcea's criterion \cite{mih}. This has led to nice applications on the Giambelli-type formula for type $A$ flag varieties by Lam and Shimozono \cite{lamshi33}. However, it is far from obvious   how one can obtain applications to full Pieri-type formulas from this criterion or the Giambelli-type formulas.

The paper is organized as follows. In section 2, we introduce some basic notions for Grassmannians across classical Lie types. In section 3, we review basic properties of the torus-equivariant cohomology. In sections 4-7, we give the equivariant Pieri rules for Grassmannians of type $A$, $C$, $B$ and $D$ respectively. Finally in the appendix, we include a manifestly positive formula for the restriction coefficients for type $A$ Grassmannians.

\subsection*{Acknowledgements}
The authors thank Hongjia Chen, Thomas Hudson, K. N. Raghavan, Sushmita Venugopalan, and especially Anders Skovsted Buch  and Leonardo Constantin Mihalcea   for useful discussions and helpful feedback. The authors are also grateful to the referee for the careful reading and valuable comments. The first author is supported by IBS-R003-D1.

\section{Grassmannians of classical Lie types}\label{S:grassmannians}

Let $V$ be an $N$-dimensional  complex vector space  equipped with a   bilinear form $\omega$, and denote
\[
IG_\omega(m, N) := \{\Sigma \leqslant V: \dim_\C \Sigma=m,\,\, \omega(\+v,\+w) = 0 \hphantom{.} \forall \+v,\+w \in \Sigma\}.
\]
Throughout this paper, we will  consider a Grassmannian variety $X=IG_\omega(m, N)$  of Lie type $A_{n-1}$, $B_n$,  $C_n$ or $D_n$, characterized by the following properties and notations respectively:
\begin{enumerate}
  \item[$A_{n-1}$:]  $\omega(\cdot, \cdot)\equiv 0$. Namely $X=Gr(m, n)$ is an ordinary Grassmannian, where $N=n$.
  \item[$B_n$:]  $\omega$  is non-degenerate and symmetric, and $N$ is odd. Then $X=OG(m, 2n+1)$  is called an odd orthogonal Grassmannian, where $N = 2n+1$.
  \item[$C_n$:]  $\omega$  is non-degenerate and skew-symmetric, and $N$ is even. Then $X=SG(m, 2n)$ is called a symplectic Grassmannian, where $N=2n$.
  \item[$D_n$:]  $\omega$  is non-degenerate and symmetric, and $N$ is even. Then $X=OG(m, 2n)$  is called an  even orthogonal Grassmannian, where $N=2n$.
\end{enumerate}
In all of these cases, we will assume $m \leq n$.
When $m=n$ we refer to $X$ as a Lagrangian Grassmannian in type $C_n$ and a maximal orthogonal Grassmannian in types $B_n$ and $D_n$.
The Grassmannian $X$ is a smooth projective variety of complex dimension $m(n-m)$ in the type $A_{n-1}$ case,   $2m(n-m)+\frac{m(m+1)}{2}$ in types $B_n$ and $C_n$, and $2m(n-m)+\frac{m(m-1)}{2}$ in type $D_n$.

 Take an isomorphism $V\cong \C^N$ by specifying a  basis $\{\mathbf{e}_1, \cdots, \mathbf{e}_N\}$ of $V$ which in addition satisfies $\omega(\mathbf{e}_i, \mathbf{e}_j)=\delta_{i+j, N+1}$ for all $i\leq j$ if $\omega$ is non-degenerate.   Define a complete (isotropic) flag $E_{\bull}$
by $E_j := \langle \+e_1,\ldots,\+e_j \rangle$, the span of the first $j$ basis vectors.  Let $[1, N]$ denote the set of integers $\{1, 2, \cdots, N\}$. A Schubert symbol for $X$ is a subset $\A=\{\lambda_1<\lambda_2<\cdots<\lambda_m\}$ of $[1, N]$ which in addition satisfies $\lambda_i+\lambda_j\neq N+1$ for all $i\leq j$ if $\omega$ is non-degenerate.  The set of Schubert symbols for $X$ is denoted by $\mathfrak{S}(X)$.

\comment{
There is a one-to-one correspondence   $\mathfrak{S}(SG(m, 2n))\overset{\simeq}{\longrightarrow}\mathfrak{S}(OG(m, 2n+1))$, defined by
\[
\A \mapsto \A^+ := (\A \cap [1,n]) \cup \{a \in [n+2,2n+1]: a-1 \in \A\}.
\]
}

The Schubert subvarieties of $X$ (relative to $E_\bull$) are parameterized by Schubert symbols as follows (see \cite{BKT2,leungli44} for identifications with alternate parametrizations).
The Grassmannian $X=IG_\omega(m, V)$ admits a transitive action of an appropriate reductive complex Lie group  $G$ of rank $n$ (with the exception of $X = OG(n,2n)$, where the action is transitive only when restricted to one of the two connected components). Precisely,   $G= GL(n,\C), SO(2n+1, \C)$,  $Sp(2n, \C)$ or $SO(2n, \C)$, according to whether $X$ is of type $A_{n-1}$, $B_n$,  $C_n$ or $D_n$.  The stabilizer of  $E_{\bull}$ in the $G$-action is a Borel subgroup  $B$ of $G$, which contains a maximal complex torus $T\cong (\C^*)^n$ with eigenvectors $\mathbf{e}_1, \cdots, \mathbf{e}_N$. Each Schubert symbol $\A$ indexes a $T$-fixed point
$\Sigma_\A := \langle \+e_{\lambda_1},\ldots,\+e_{\lambda_m} \rangle$,  whose $B$-orbit closure is the Schubert variety $X_\A := \overline{B.\Sigma_\A}$. Now we let $B^-$ denote  the opposite Borel subgroup intersecting $B$ at $T$, and define the opposite Schubert
variety $X^\A := \overline{B^-.\Sigma_\A}$.
 We denote the codimension of $X_\A$ in $X$ (equivalently the dimension of $X^\A$) by $|\A|$, which is given by the formula:
 \begin{equation*}
  |\A|=\begin{cases}
    \dim X- \sum^m_{j=1}(\lambda_j - j)&\mbox{for type } A_{n-1},\\
    &\\
    \dim X-\sum^m_{j=1}(\lambda_j - j - \#\{i<j:\lambda_i+\lambda_j>N+1\})&\mbox{for type }   C_n,\\
    &\\
    \dim X-\sum^m_{j=1}(\lambda_j - j - \#\{i \leq j:\lambda_i+\lambda_j>N+1\})&\mbox{for types }   B_n, D_n.
 \end{cases}
 \end{equation*}

We note that in types $A_{n-1}$, $B_n$ and $C_n$ the Schubert varieties $X_\A$ and $X^\A$ have alternative characterizations by the \emph{Schubert conditions}:
\begin{align*}
 X_\A &= \{\Sigma \in X: \dim(\Sigma \cap E_{\lambda_j}) \geq j \text{ for } 1 \leq j \leq m\}, \text{ and} \\
 X^\A &= \{\Sigma \in X: \dim(\Sigma \cap \langle \+e_{\lambda_j},\+e_{\lambda_j+1} \ldots, \+e_{N}\rangle) \geq m+1-j \text{ for } 1 \leq j \leq m\}.
\end{align*}
For an analagous characterization in type $D_n$, see \cite[Proposition A.2]{BKT4}.

  Among  the Schubert varieties $X_\lambda$, there are special Schubert varieties $X_{\sym_p}$, also denoted simply as $X_p$, which have codimension $p$ and are determined by the single condition $\{\Sigma \in X: \dim(\Sigma \cap E_{n_p})\geq 1\}$.
Here $p$ is a positive integer with $p \leq N-m$ if $X$ is of type $A$ or $C$, and $p\leq N-m-1$ otherwise. The special  Schubert symbol   $\sym_p$, together with the integer  $n_p$ will be  specified  when we discuss  the equivariant Pieri rules individually later.

Given Schubert symbols $\A = \{\lambda_1 < \ldots < \lambda_m\}$
and $\B = \{\mu_1 < \ldots < \mu_m\}$, we write
$\A \leq \B$ if $\lambda_j \leq \mu_j$ for $1 \leq j \leq m$.  In types $B$ and $C$,
we have $\A \leq \B$ if and only if $X_\A \subset X_\B$.  In Section \ref{S:type_d}, we define a stronger relation $\preceq$ on Schubert symbols which coincides with the relation $X_\A \subset X_\B$ in type $D$.
The \emph{Richardson variety} $Y_{\A,\B} := X_\A \cap X^\B$ has dimension $|\B| - |\A|$, and
is nonempty if and only if $\B \leq \A$ (resp. $\B \preceq \A$ in type $D$).
In particular,  $Y_{\A, \A}$ consists of a single $T$-fixed point $\Sigma_\A$.

Given $\B \leq \A$, we define the associated \emph{Richardson diagram} by
$$D_X(\A,\B) := \{(j,c) : \mu_j \leq c \leq \lambda_j \} \subset [1,m] \times [1,N].$$
We will simply denote $D_X(\A,\B)$ as $D(\A, \B)$ whenever there is no confusion, and will  represent this set visually as an $m \times N$ matrix with stars for
every entry in $D(\A,\B)$ and zeros elsewhere.
We say $c \in [1,N]$ is a \emph{zero column} of $D(\A,\B)$ if $\lambda_j < c < \mu_{j+1}$ for some $j \in [0,m]$,
where we set $\lambda_0=0$ and $\mu_{m+1} = N+1$ for convenience. In types $B$, $C$, and $D$, we will also define the notion of a \emph{cut} in $D(\A,\B)$. As we will see,  zero columns and cuts will be further used to define combinatorial sets $\mathcal{L}_{\A,\B}$ and  $\mathcal{Q}_{\A,\B}$,  which index certain hyperplanes  and quadratic hypersurfaces in $\mathbb{P}^{N-1}$.

\section{Torus Equivariant Cohomology}\label{S:cohomology}
The aim of this paper is to give an equivariant Pieri rule for $X=IG_\omega(m, N)$. In this section we  review some basic properties of the torus-equivariant cohomology, and do the first step of our reductions.
\subsection{Basic properties of $H_T^*(X)$} We refer the readers to \cite{fuand, kumar} and references therein for   the facts mentioned here.
Recall that $X$ has a transitive $G$-action, and $T\subset G$ is a fixed maximal complex torus.
Let pt denote a point equipped with a trivial $T$-action.
The $T$-equivariant cohomology $H^*_T(\text{pt})$ 
is given by
 $$H^*_T(\text{pt}) = \Lambda := \Z[t_1,\ldots,t_n],$$
 where  each $t_i$ is defined as follows.
Let $\chi_i$ denote the character  that sends   $(z_1,\ldots,z_n) \in T = (\C^*)^n$ to $z_i$.
Note that $\chi_i$ induces a  one-dimensional representation $\C_{\chi_i}$ of $T$, defined by
$(z_1,\ldots,z_n).v \mapsto z_iv$ for any $v \in \C$.
We then define $t_i := c^T_1(\C_{\chi_i})$, for $1 \leq i \leq n$, where
$\C_{\chi_i}$ is treated as a $T$-equivariant line bundle over the point pt.\footnote{It is also common to use the basis of opposite Schubert varieties together with
the identification $t_i := -c^T_1(\C_{\chi_i})$.  The resulting structure coefficients
will still be Graham-positive (in the sense of being positive polynomials in the negative simple roots).}

The  $T$-equivariant cohomology  $H^*_T(\cdot)$ is a contravariant functor from complex $T$-spaces to graded $\Lambda$-algebras.
For a $T$-invariant subvariety  $Y$ of $X$, we denote the natural inclusion and projection, respectively, by
 $$\iota_Y: Y\hookrightarrow X;\qquad \rho_Y: Y\longrightarrow \{\mbox{pt}\}.$$
 Both $\iota_Y$ and $\rho_Y$ are proper maps, and hence induce
 equivariant pushforwards $\iota_{Y,*}: H^*_T(Y) \to H^*_T(X)$ and
 $\rho_{Y,*}: H^*_T(Y) \to H^*_T(\text{pt}) = \Lambda$  respectively.
 We will henceforth denote the map $\rho_{Y,*}$ by $\int^T_Y$, as in the introduction.
The subvariety $Y$ determines an equivariant cohomology class $[Y]^T \in H^{2\text{codim}Y}_T(X)$
 under $\iota_{Y,*}$.

We notice that the Schubert subvarieties $X_\A$ and $X^\A$ are all $T$-invariant. In fact,
$H^*_T(X)$   is a  free $\Lambda$-module and the sets
    $\{[X_\A]^T\}$ and $\{[X^\A]^T\}$ form $\Lambda$-bases for  $H^*_T(X)$.
    These bases are dual with respect to the equivariant pushforward to the
    point:
        $$\int^T_X [X_\A]^T \cdot [X^\B]^T = \delta_{\A,\B}.$$
The structure coefficients in the equivariant product,
\[ [X_{\A}]^T\cdot [X_{\B}]^T=\sum_{\sC}N^{\sC}_{\A, \B}(X) [X_{\sC}]^T,\]
are simply denoted $N^{\sC}_{\A, \B}=N^{\sC}_{\A, \B}(X)$ when there is no confusion, and are given by
 $$N_{\A,\B}^{\sC}= \int^T_X [X_\A]^T \cdot [X_\B]^T\cdot [X^{\sC}]^T.$$
They    are  homogeneous polynomials of degree $(|\A|+|\B|-|\sC|)$ in the negative simple roots with non-negative integer coefficients \cite{Grah} (we will specify the simple roots in terms of the weights $t_i$ later).

Let $X^T$ denote the set of $T$-fixed points $\Sigma_\A$ in $X$. The $T$-equivariant inclusion $\iota_T:=\iota_{X^T}$ induces an injective ring morphism
 $$\iota_T^*: H^*_T(X) \hookrightarrow H^*_T(X^T) =\oplus_{\A} \Lambda,$$
  which extends to an isomorphism over the fraction field of $\Lambda$.
We call    $N^\nu_{\nu,\B}$ a \textit{restriction coefficient}, due to the following well-known fact \cite{Arabia}:
\[N^\nu_{\nu,\B}  = \iota^*_{\Sigma_\nu}[X_{\B}]^T.\]
We have $N^\nu_{\nu,\B}=0$  unless $\Sigma_\nu \in X_\B$, or equivalently $\nu\leq  \B$.
This vanishing is a special case of the fact that $N_{\A, \B}^\sC=0$ unless $\sC\leq  \A$ and $\sC\leq  \B$ \cite{koku}.

\subsection{First step  for the equivariant Pieri rule}Our goal is to give an equivariant Pieri rule for $X$; that is, a manifestly positive formula for equivariant multiplication by a special Schubert class,
 \[ [X_{\A}]^T\cdot [X_p]^T=\sum_{\B}N^{\B}_{\A, p}  [X_{\B}]^T.\]
 We remark that the Grassmannian $IG_{\omega}(m, N)$ carries an exact sequence of tautological bundles: $0\to \mathcal{S}\to \C^{N}\to \mathcal{Q}\to 0$.
 The special equivariant Schubert class $[X_p]^T$ coincides with
the  relative equivariant Chern class $c_p^T(\mathcal{Q}-E_{n_p})$  (possibly up to a factor of $2$ in types $B$ and $D$), where  $E_{n_p}$ is regarded as a trivial vector subbundle of $\C^N$ which is stable under the action of $T$.

 We let $IF_\omega(1, m; N)$ denote the two-step isotropic flag variety, and consider the natural projections $\pi$ and $\psi$ as below.
\begin{center}
\begin{tikzpicture}[node distance = 1.8cm]
  \node (OF) {$IF_\omega(1,m;N)$};
  \node(dummy) [right of=OF]{};
  \node (OG) [below of=OF] {$X=IG_\omega(m, N)$};
  \node (Q) [right of=dummy] {$IG_\omega(1, N) \subset \P^{N-1}$};
  \draw[->] (OF) edge node[left]{$\pi$} (OG);
  \draw[->] (OF) edge node[above]{$\psi$}(Q);
  \end{tikzpicture}
\end{center}

\noindent Let $Z:=IG_\omega(1,N) \subset \P^{N-1}$.  Recall that $X_p = \{\Sigma \in X: \dim(\Sigma \cap E_{n_p})\geq 1\}$.
It follows that  $X_p = \pi\big(\psi^{-1}(Z \cap \P(E_{n_p}))\big)$.  Note that the subvariety $L_p \subset Z$ mentioned in the
introduction is precisely $Z \cap \P(E_{n_p})$.  Let $Z_{\A,\B}$ be the projected Richardson variety $\psi(\pi^{-1}(Y_{\A,\B}))$. Following \cite[\S 5]{BKT2}, we write
$\A \to \B$ if  appropriate   combinatorial relations are  satisfied by the Schubert symbols $\A$ and $\B$.  Precise descriptions of the relation $\A \to \B$ and the varieties $\P(E_{n_p})$ and $Z_{\A,\B}$  will be postponed to the type-dependent discussions   in the next four sections.
As mentioned in the introduction, we will achieve our equivariant Pieri rule in two main steps. Let us end this section by   the first step:
\begin{prop}\label{P:move_to_projective_space}
 Given $\B \leq \A \in \mathfrak{S}(X)$ and $p \in [1,N-m]$ (resp. $[1,N-m-1]$ in type $B$ or $D$), we have $N^\B_{\A,p}(X) = 0$ unless
$\A \to \B$, $|\B| \leq |\A| + p$, and $\B \leq \sym_p$. When $\A \to \B$, we have
\begin{align*}
N_{\A, p}^\B
&=\int^T_Z[Z_{\A,\B}]^T \cdot [Z \cap \P(E_{n_p})]^T.
\end{align*}
\end{prop}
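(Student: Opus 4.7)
The plan is to transport the computation of $N^\B_{\A,p}$ from $X$ to the auxiliary space $Z = IG_\omega(1,N)$ via the two-step flag variety $IF_\omega(1,m;N)$ and its projections $\pi,\psi$, using the projection formula together with two ``birationality'' identities for equivariant pushforwards.

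My first step would be to rewrite
$$N^\B_{\A,p} \;=\; \int^T_X [X_\A]^T\cdot[X_p]^T\cdot[X^\B]^T \;=\; \int^T_X [Y_{\A,\B}]^T\cdot [X_p]^T,$$
using the standard fact that $X_\A$ (a $B$-orbit closure) and $X^\B$ (a $B^-$-orbit closure) meet in the reduced, irreducible Richardson variety $Y_{\A,\B}$, so that $[X_\A]^T\cdot[X^\B]^T = [Y_{\A,\B}]^T$ equivariantly, with the right-hand side interpreted as zero whenever $\B\not\leq\A$.

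Next, I would establish two key pushforward identities. The first, $[X_p]^T = \pi_*\psi^*[L_p]^T$ with $L_p := Z\cap\P(E_{n_p})$, follows from the observations that $\psi^{-1}(L_p) = \{(V_1,V_m)\in IF_\omega(1,m;N) : V_1\subset E_{n_p}\}$ maps onto $X_p$ under $\pi$, and a generic $V_m\in X_p$ meets $E_{n_p}$ in a unique line, so the restricted morphism is birational. The second identity, $[Z_{\A,\B}]^T = \psi_*\pi^*[Y_{\A,\B}]^T$ under the hypothesis $\A\to\B$, requires type-specific analysis: following \cite{BKT2}, the combinatorial relation $\A\to\B$ is designed precisely so that $\psi$ restricts to a birational morphism from $\pi^{-1}(Y_{\A,\B})$ onto $Z_{\A,\B}$. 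The combinatorial details are postponed to Sections \ref{S:type_a}--\ref{S:type_d}.

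With these identities in hand, two applications of the projection formula give
\begin{align*}
N^\B_{\A,p}
&= \int^T_X [Y_{\A,\B}]^T\cdot\pi_*\psi^*[L_p]^T
 = \int^T_{IF_\omega(1,m;N)} \pi^*[Y_{\A,\B}]^T\cdot\psi^*[L_p]^T \\
&= \int^T_Z \psi_*\pi^*[Y_{\A,\B}]^T\cdot [L_p]^T
 = \int^T_Z [Z_{\A,\B}]^T\cdot [L_p]^T,
\end{align*}
which is the desired formula. For the vanishing assertions: if $|\B|>|\A|+p$, then $N^\B_{\A,p}\in\Lambda$ would be homogeneous of negative degree $|\A|+p-|\B|$ and therefore vanishes; if $\B\not\leq\sym_p$, the general Schubert-calculus vanishing applied to $N^\B_{\A,p}=\int^T_X[X_\A]^T[X_p]^T[X^\B]^T$ (which requires both $\B\leq\A$ and $\B\leq\sym_p$) rules out a nonzero contribution; and if $\A\not\to\B$ while $\B\leq\A$, the restricted map $\psi|_{\pi^{-1}(Y_{\A,\B})}$ has generic fibers of positive dimension, forcing $\psi_*\pi^*[Y_{\A,\B}]^T = 0$ and hence $N^\B_{\A,p}=0$ by the same chain of identities.

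The main obstacle I anticipate is the type-dependent verification behind the second pushforward identity: showing that the combinatorial relation $\A\to\B$ really does capture the birationality of $\psi|_{\pi^{-1}(Y_{\A,\B})}$ onto $Z_{\A,\B}$, and confirming that in its absence the fiber dimension drops enough to kill the pushforward. The geometric principle is uniform across types, but the precise matching of the combinatorics to the flag-variety geometry must be carried out separately in each of types $A$, $B$, $C$, and $D$.
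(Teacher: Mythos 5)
Your proposal is correct and follows essentially the same route as the paper: reduce to $\int^T_X[Y_{\A,\B}]^T\cdot[X_p]^T$, transport to $Z$ via the projection formula along $\pi$ and $\psi$, and invoke the analysis of \cite[\S 5]{BKT2} showing that $\psi|_{\pi^{-1}(Y_{\A,\B})}$ is birational onto $Z_{\A,\B}$ exactly when $\A\to\B$ and has positive-dimensional fibers otherwise. The only cosmetic differences are that you make the identity $[X_p]^T=\pi_*\psi^*[L_p]^T$ and the degree argument for the vanishing when $|\B|>|\A|+p$ explicit, where the paper cites \cite{koku} for the latter.
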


\begin{proof}
It is well known that $N^{\B}_{\A,p}$ vanishes unless $|\B| \leq |\A| + p$ and $\B \leq \sym_p$ \cite{koku}.
Given Schubert symbols $\B \leq \A$, we can apply the projection formula to get
\[
 N^{\B}_{\A,p} = \int^T_X[Y_{\A,\B}]^T \cdot [X_p]^T = \int^T_Z\psi_*[\pi^{-1}(Y_{\A,\B})]^T \cdot [Z \cap \P(E_{n_p})]^T.
\]
In \cite[\S 5]{BKT2} it is shown that the projection $\psi:\pi^{-1}(Y_{\A,\B}) \to Z_{\A,\B}$
has positive dimensional fibers when $\A \not\to \B$, and is a birational isomorphism when $\A \to \B$.
The Gysin pushforward $\psi_*[\pi^{-1}(Y_{\A,\B})]^T$ therefore vanishes unless $\A \to \B$,
in which case it equals $[Z_{\A,\B}]^T$.
\end{proof}



\section{Type $A$ Pieri Reduction}\label{S:type_a}
In this section, we consider the Grassmannian  $X=Gr(m,n)$.
We will show that each equivariant Pieri coefficient $N^\B_{\A,p}(X)$
is equal to  a restriction coefficient $N_{\sC, p'}^\sC(X')$ for  a possibly
different type $A$ Grassmannian $X'$.
By a result of Graham \cite{Grah}, each coefficient is a homogeneous polynomial in $\mathbb{Z}_{\geq0}[-\hat{\alpha}_1, \cdots, -\hat{\alpha}_{n-1}]$.
Here $\hat{\alpha}_i:=t_i-t_{i+1}\in H_T^2(\mbox{pt})$ can be naturally identified with the simple roots of $SL(n, \C)\subset GL(n, \C)=G$. More precisely, each character $\chi_i: T\to \mathbb{C}^*$ induces a weight $\epsilon_i: \mbox{Lie}(T) \to \mathbb{C}$, and $\hat{\alpha}_i$ is identified with the simple root $\epsilon_i-\epsilon_{i+1}$.  We will use an analagous identification in types $B$, $C$, and $D$ without further comment.
\comment{
The Schubert symbols $\lambda=\{\lambda_1<\cdots<\lambda_m\}$ can be identified with partitions $(a_1, \cdots, a_m)$ by $a_i:=n+i-m-\lambda_i$.
}

The special Schubert varieties $X_p=X_{\sym_p}$ are indexed by integers $1\leq p\leq n-m$, and satisfy
$X_p = \pi(\psi^{-1}(\P(E_{n+1-p-m})))$. The special Schubert symbol $\sym_p$ is given by
  \[\sym_p=\{n+1-m-p, n+2-m, \cdots, n\},\]
which corresponds to the special partition $(p, 0, \cdots, 0)$ in the traditional parameterization of Schubert varieties by partitions.

Given Schubert symbols $\mu\leqslant \la$, we let $x_1,\ldots,x_n$ denote the basis of  $V^*$ dual to $\+e_1,\ldots,\+e_n$. The projected Richardson variety
 $Z_{\A,\B} =\psi(\pi^{-1}(Y_{\A, \B}))\subset \P^{n-1}$ is defined by the linear equations
 $x_c=0$ with $c$ varying over the set
\[
\mathcal{L}_{\A,\B} := \{c \in [1,n]: c \text{ is a zero column of } D(\A,\B)\}
\]
(see e.g. \cite[\S 9.4]{fulton} or \cite[Lemma 3.3]{buch_ravikumar}).
We define an associated set
      $$\nu:=\nu(\lambda, \mu) = [1,n] \setminus \mathcal{L}_{\A,\B}$$
      which is simply the complement of $\mathcal{L}_{\A,\B}$.
      The set $\nu$ can naturally be thought of  as a Schubert symbol for the Grassmannian
$X' := Gr(m',n)$ with  $m' := \#\sC$.

We consider the natural projections
\begin{align}\label{E:projections-new}
    Gr(m', n)\overset{\pi'}{\longleftarrow} F\ell(1, m'; n)\overset{\psi'}{\longrightarrow} Gr(1,n)=\mathbb{P}^{n-1},
\end{align}
 and study the projected Richardson variety  $Z_{\sC,\sC}:=\psi'\circ (\pi')^{-1}(X'_{\sC}\cap X'^{\sC})\subset\mathbb{P}^{n-1}$. We have the following trivial but important observation.
\begin{lemma}\label{L:same_zero_columns}
The projected Richardson varieties $Z_{\A,\B}$ and $Z_{\sC,\sC}$ in $\P^{n-1}$
are equal.
\end{lemma}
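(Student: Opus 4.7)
The plan is to show that both projected Richardson varieties are cut out in $\P^{n-1}$ by exactly the same family of linear equations, namely $\{x_c = 0 : c \in \mathcal{L}_{\A,\B}\}$. By the description of $Z_{\A,\B}$ cited immediately before the lemma (e.g.\ \cite[\S 9.4]{fulton} or \cite[Lemma 3.3]{buch_ravikumar}), this is already what defines $Z_{\A,\B}$. So the entire content of the lemma is the assertion $\mathcal{L}_{\sC,\sC} = \mathcal{L}_{\A,\B}$, and the proof reduces to a direct inspection of the Richardson diagram $D(\sC,\sC)$.

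First I would observe that $\sC \in \mathfrak{S}(Gr(m',n))$ is automatic in type $A$: since $\sC = [1,n] \setminus \mathcal{L}_{\A,\B}$ is a subset of $[1,n]$ of size $m'$, there are no isotropy constraints to check. The relation $\sC \leq \sC$ is trivial, so the Richardson variety $Y_{\sC,\sC} = X'_\sC \cap X'^\sC$ is nonempty and consists of the single $T$-fixed point $\Sigma_\sC$.

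Next I would compute $D(\sC,\sC)$ explicitly. Writing $\sC = \{\nu_1 < \nu_2 < \cdots < \nu_{m'}\}$, the definition
\[
D(\sC,\sC) = \{(j,c) : \nu_j \leq c \leq \nu_j\}
\]
gives a diagram with a star in each row $j$ at column $\nu_j$ and zeros everywhere else. Consequently, the zero columns of $D(\sC,\sC)$, i.e.\ those $c \in [1,n]$ satisfying $\nu_j < c < \nu_{j+1}$ for some $j \in [0,m']$ (with $\nu_0 = 0$ and $\nu_{m'+1} = n+1$), are precisely the elements of $[1,n] \setminus \sC$. By construction of $\sC$ this set is exactly $\mathcal{L}_{\A,\B}$, so $\mathcal{L}_{\sC,\sC} = \mathcal{L}_{\A,\B}$.

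Applying the linear-equations description of the projected Richardson variety to both pairs $(\A,\B)$ and $(\sC,\sC)$ then yields
\[
Z_{\A,\B} = \{x_c = 0 : c \in \mathcal{L}_{\A,\B}\} = \{x_c = 0 : c \in \mathcal{L}_{\sC,\sC}\} = Z_{\sC,\sC}
\]
as subvarieties of $\P^{n-1}$, which is the claim. There is no serious obstacle here; the only thing to be careful about is book-keeping with the boundary conventions $\nu_0 = 0$, $\nu_{m'+1} = n+1$ so that the extremal columns $1, \ldots, \nu_1 - 1$ and $\nu_{m'}+1, \ldots, n$ are correctly identified as zero columns of $D(\sC,\sC)$.
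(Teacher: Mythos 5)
Your proposal is correct and is essentially the paper's own argument: the paper's proof simply notes that $D_X(\A,\B)$ and $D_{X'}(\sC,\sC)$ have the same zero columns, hence the same linear equations cut out both projected Richardson varieties. You have merely spelled out the verification that the zero columns of $D(\sC,\sC)$ are exactly $[1,n]\setminus\sC=\mathcal{L}_{\A,\B}$, which is the intended content.
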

\begin{proof}
The diagrams $D_X(\A,\B)$ and $D_{X'}(\sC,\sC)$ have the same zero columns, and hence
the same equations define $Z_{\A,\B}$ and $Z_{\sC,\sC}$.
\end{proof}

Given Schubert symbols  $\A=\{\lambda_1 < \ldots < \lambda_m\}$ and $\B = \{\mu_1 < \ldots < \mu_m\}$  in $\mathfrak{S}(X)$, we write $\A \to \B$, if both of the following hold: (1) $\B \leq \A$;   (2)   $\lambda_i < \mu_{i+1}$ for $1 \leq i \leq m-1$.
In other words the Richardson diagram  $D(\A,\B)$ is well defined, and none of its columns have more than one star\footnote{Equivalently, letting $P(\lambda)$ and $P(\mu)$ be the corresponding \emph{partitions}, the skew-diagram $P(\mu)/P(\lambda)$ is a horizontal strip.}.

\begin{prop}[Equivariant Pieri rule for $Gr(m,n)$]\label{P:redux_A}
Given Schubert symbols $\A \to \B$ and a positive integer $p \leq n-m$
such that $|\B| \leq |\A| + p$, we define $p':=|\A|+p-|\B|\geq 0$. We then
have
\[N^\B_{\A,p}(X) = N^\sC_{\sC,p'}(X').\]
Furthermore if  $\B \leq \sym_p$, then
$N^\B_{\A,p}(X) \neq 0$.
\end{prop}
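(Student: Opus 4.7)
The plan is to apply Proposition \ref{P:move_to_projective_space} twice, once to $X = Gr(m,n)$ and once (in reverse) to $X' = Gr(m',n)$, using Lemma \ref{L:same_zero_columns} to bridge the two. First, by Proposition \ref{P:move_to_projective_space} applied to $X$, the hypothesis $\A \to \B$ yields
\[
N^\B_{\A,p}(X) = \int^T_{\P^{n-1}} [Z_{\A,\B}]^T \cdot [\P(E_{n_p})]^T,
\]
since in type $A$ one has $Z = IG_\omega(1,n) = \P^{n-1}$. Lemma \ref{L:same_zero_columns} then replaces $[Z_{\A,\B}]^T$ by $[Z_{\sC,\sC}]^T$.

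The key compatibility to verify is that the linear cut $\P(E_{n_p})$ coincides with its $X'$-analogue $\P(E_{n_{p'}})$. Since $n_p = n+1-m-p$ and $n_{p'} = n+1-m'-p'$, this amounts to $m' - m = |\B| - |\A|$. I would establish this by counting stars in $D(\A,\B)$: the hypothesis $\A \to \B$ forces each column to contain at most one star, so the number of non-zero columns $m' = \#\sC$ equals the total star count $\sum_{j=1}^m (\lambda_j - \mu_j + 1) = m + (|\B| - |\A|)$, using $|\B| - |\A| = \sum_j (\lambda_j - \mu_j)$. Consequently $n_p = n_{p'}$, so the two linear subvarieties coincide.

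Now applying Proposition \ref{P:move_to_projective_space} \emph{in reverse} to $X'$ with the (automatic) relation $\sC \to \sC$ gives
\[
\int^T_{\P^{n-1}} [Z_{\sC,\sC}]^T \cdot [\P(E_{n_{p'}})]^T = N^\sC_{\sC,p'}(X'),
\]
which, chained with the previous identifications, yields $N^\B_{\A,p}(X) = N^\sC_{\sC,p'}(X')$. This is the main geometric content.

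For the non-vanishing assertion, recall that $N^\sC_{\sC,p'}(X') = \iota^*_{\Sigma_\sC}[X'_{p'}]^T$ is a restriction of a Schubert class to a $T$-fixed point, so it vanishes precisely when $\Sigma_\sC \notin X'_{p'}$, i.e.\ when $\sC \not\leq \sym_{p'}$. I would translate this condition back: the smallest element of $\sC$ satisfies $\nu_1 = \mu_1$ (since every $c < \mu_1$ is a zero column while $\mu_1$ is hit by a star), and the larger entries of $\sC$ automatically satisfy the required bounds, so $\sC \leq \sym_{p'}$ reduces to $\mu_1 \leq n+1-m'-p' = n+1-m-p$, which is exactly the condition $\B \leq \sym_p$. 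The main potential obstacle is confirming positivity of the restriction coefficient when $\sC \leq \sym_{p'}$; this can be dispatched either by invoking Billey's formula (each factor in the product is a nonzero negative simple root sum, so the product is nonzero) or by citing the explicit positive formula given in the appendix. The only bookkeeping I expect to be delicate is verifying $\nu_1 = \mu_1$ and the star-counting identity; everything else is a direct application of the machinery already established.
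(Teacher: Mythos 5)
Your proposal is correct and follows essentially the same route as the paper: apply Proposition \ref{P:move_to_projective_space} to pass to $\P^{n-1}$, use Lemma \ref{L:same_zero_columns} to swap $Z_{\A,\B}$ for $Z_{\sC,\sC}$, verify $m'+p'=m+p$ so the linear cuts $\P(E_{n_p})$ agree, and apply the proposition in reverse for $X'$; the nonvanishing argument via $\nu_1=\mu_1$ is also the paper's. The only cosmetic difference is that you establish $m'-m=|\B|-|\A|$ by counting stars in $D(\A,\B)$, whereas the paper reads it off from $\dim Y_{\A,\B}=|\B|-|\A|=\#\sC-m$; both are valid.
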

\begin{proof}Since $\A \to \B$, we have $\dim(Y_{\A,\B}) = \#\sC - m
= m'-m$.  On the other hand,  $\dim(Y_{\A,\B})=|\B| - |\A|$. It follows that
$m'-m=p-p'$.
Therefore we have
\begin{align*}
 \int^T_X [X_\A]^T \cdot [X^\B]^T \cdot [X_{p}]^T
&=\int^T_{\P^{n-1}} [Z_{\A,\B}]^T \cdot [\P(E_{n+1-p-m})]^T\\
&=\int^T_{\P^{n-1}} [Z_{\sC,\sC}]^T \cdot [\P(E_{n+1-p'-m'})]^T\\
&=\int^T_{X'} [X'_\sC]^T \cdot [(X')^\sC]^T \cdot [X'_{p'}]^T.
\end{align*}
The first and third equalities follow
from Proposition \ref{P:move_to_projective_space}, and
the second equality follows from Lemma \ref{L:same_zero_columns} and the equality
$m'+p'=m+p$.

The statement of nonvanishing follows from the
fact that $\B \leq \sym_p$ if and only if
$\mu_1 \leq n+1-m-p$, and
$\sC \leq \sym_{p'}$ if and only if
$\nu_1 \leq n+1-m'-p'$.  Since
$\nu_1 = \mu_1$ and $-m-p = -m' - p'$,
these statements are equivalent.
It is well known that
$N^\sC_{\sC,p'} \neq 0$ exactly when
$\sC \leq \sym_{p'}$ (see e.g. \cite{KnutTao}).
\end{proof}
 There have been several manifestly positive formulas for the restriction coefficients $N^\sC_{\sC,p'}(X')$
\cite{koku, AJS, Billey, buch_rimanyi}, and we include one in Appendix \ref{S:appendix} that uses Schubert symbols.
Combining Lemma \ref{L:restriction_formula2} with Proposition \ref{P:redux_A} yields an equivariant Pieri rule for the ordinary Grassmannian.

\begin{example}
 Let $X = Gr(3,8)$ and let $\A = \{1,4,8\}$ and $\B = \{1,3,6\}$.
 The Richardson diagram $D(\A,\B)$ is then
 \[
\left(
  \begin{array}{cccccccc}
    * & 0 & 0 & 0 & 0 & 0 & 0 & 0  \\
    0 & 0 & * & * & 0 & 0 & 0 & 0  \\
    0 & 0 & 0 & 0 & 0 & * & * & *  \\
  \end{array}
\right).
\]
The associated set $\nu:=\sC(\lambda, \mu)=\{1,3,4,6,7,8\}$ is a Schubert symbol for $X'=Gr(6,8)$. We have
$N^\B_{\A,5}(X) = N^\sC_{\sC,2}(X') = (t_2-t_1)(t_5-t_1),$ where the
final equality follows from Lemma \ref{L:restriction_formula2}.

\comment{and the diagram $D(\sC,\sC)$ will be
 \[
\left(
  \begin{array}{cccccccc}
    * & 0 & 0 & 0 & 0 & 0 & 0 & 0  \\
    0 & 0 & * & 0 & 0 & 0 & 0 & 0  \\
    0 & 0 & 0 & * & 0 & 0 & 0 & 0  \\
    0 & 0 & 0 & 0 & 0 & * & 0 & 0  \\
    0 & 0 & 0 & 0 & 0 & 0 & * & 0  \\
    0 & 0 & 0 & 0 & 0 & 0 & 0 & *  \\
  \end{array}
\right).
\]}
\end{example}

\section{Type $C$ Pieri Reduction}\label{S:type_c}
In this section, we consider a symplectic  Grassmannian  $X=SG(m,2n)$.
The special Schubert varieties $X_p=X_{\sym_p}$ are indexed by integers $1\leq p\leq 2n-m$.
Since $IG_{\omega}(1,2n) = \P^{2n-1}$, we have
$X_p = \pi(\psi^{-1}(\P(E_{n_p})))$, where   $n_p := 2n+1-m-p$.  Furthermore, we have 
 \begin{equation*}
 \sym_p =
 \begin{cases}
\{n_p\} \cup [2n+2-m,2n] &\text{ if } n_p > m-1, \\
(\{n_p\} \cup [2n+1-m,2n]) \setminus \{2n+1-n_p\} &\text{ if } n_p \leq m-1. \\
 \end{cases}
\end{equation*}

Given Schubert symbols $\B \leq \A\in \mathfrak{S}(X)$, we call $c \in [0,2n]$ a \emph{cut}\footnote{Our definition of \emph{cut} differs from \cite{BKT2}, wherein $c$ must satisfy $\lambda_j \leq c < \mu_{j+1}$ for some $j$.  However subsequent notions are equivalent.} in the Richardson diagram  $D(\A,\B)$ if either
$\lambda_j \leq c < \mu_{j+1}$ or $\lambda_j \leq 2n-c < \mu_{j+1}$ for some $j \in [0,m]$. We notice that   $0$ and $2n$ are always cuts, and set
\begin{align*}
   \mathcal{L}_{\A,\B} &:= \{c \in [1,2n]: \lambda_j < c < \mu_{j+1} \text{ for some } j \in [0,m]\}\\
                       &\,\,\quad \bigcup \{c \in [1,2n]: \mu_j=2n+1-c=\lambda_j \text{ for some } j \in [1,m]\},\\
   \mathcal{Q}_{\A,\B} &:= \{c \in [2,n]: c \text{ is a cut in } D(\A,\B) \text{ and } c-1 \text{ is not}\}.
\end{align*}

 Let   $\{x_j\}$ denote the basis of $V^*$ dual to $\{\+e_j\}$. It is shown in \cite[\S 5]{BKT2} that the projected Richardson variety $Z_{\A,\B}$ is the complete intersection in $\P^{2n-1}$ cut out by the polynomials
\begin{enumerate}
\item $\{x_c: c \in \mathcal{L}_{\A,\B}\}$, and
\item $\{x_{d+1} x_{2n-d} + \ldots + x_c x_{2n+1-c}: c \in \mathcal{Q}_{\A,\B}\}$, where $d$ is the largest cut less than $c$.
\end{enumerate}
We therefore let $$m' := 2n - \#\mathcal{L}_{\A,\B} - \#\mathcal{Q}_{\A,\B} = \dim(Z_{\A,\B})+1\geq 1$$
be the dimension of the affine cone over $Z_{\A,\B}$.
Following \cite[\S 5]{BKT2}, we write $\A \to \B$ if the Richardson diagram $D(\A,\B)$ is defined, contains no $2 \times 2$ blocks of stars, and whenever
it contains two stars in column $c$,   it contains one star in column $2n+1-c$. That is,
\begin{defn}
Given Schubert symbols $\A$ and $\B$ in $\mathfrak{S}(X)$, we write $\A \to \B$ when
\begin{enumerate}
\item $\B \leq \A$,
\item $\lambda_i \leq \mu_{i+1}$ for $1 \leq i \leq m-1$, and
\item if $\lambda_i = \mu_{i+1}$ for some $i$, then $\mu_j < 2n+1-\lambda_i < \lambda_j$ for some $j$.
\end{enumerate}
\end{defn}

Given $\A \to \B$ in $\mathfrak{S}(X)$,  let $\nu(\A, \B) := [1,2n] \setminus \mathcal{L}_{\A,\B}$.
For any subset $\mathcal{I} \subset \mathcal{Q}_{\A,\B}$, we define an associated set
\begin{equation}\label{E:nu_I_type_c}
\sC_{\mathcal{I}}(\A, \B) := \nu(\lambda, \mu) \setminus (\mathcal{I} \cup \{2n+1-c: c \in \mathcal{Q}_{\A,\B}\setminus \mathcal{I}\}),
\end{equation}
which we simply write as $\sC_{\mathcal{I}}$ whenever there is no confusion with $\A, \B$.
Moreover, we shall naturally  think of   $\sC_\mathcal{I}$   as a Schubert symbol
for the type $A$ Grassmannian  $X' := Gr(m', 2n)$, by noting that  the set  $\sC_{\mathcal{I}}$  always has  cardinality
$m'$ for any $\mathcal{I}$.
We therefore obtain projected Richardson varieties $Z_{\sC_\mathcal{I},\sC_{\mathcal{I}}} \subset \mathbb{P}^{2n-1}$ as defined in the case of type $A$ Grassmannians using \eqref{E:projections-new}.  We note that $Z_{\sC_\mathcal{I},\sC_{\mathcal{I}}}$ is the complete intersection in $\mathbb{P}^{2n-1}$ cut out by the linear equations $x_c=0$ for $c\in \mathcal{L}_{\lambda, \mu}$, $x_c=0$ for $c\in \mathcal{I}$, and $x_{2n+1-c}=0$ for $c \in (\mathcal{Q}\setminus \mathcal{I})$.

We have the following important lemma.
\begin{lemma}\label{L:type_C_degeneration} As classes in $H^*_T(\mathbb{P}^{2n-1})$, we have
\[[Z_{\A,\B}]^T = \sum_{\mathcal{I} \subset \mathcal{Q}_{\A,\B}} [Z_{\sC_\mathcal{I},\sC_{\mathcal{I}}}]^T. \]
\end{lemma}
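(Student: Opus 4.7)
The approach is to expand both sides as products of divisor classes in $H^*_T(\P^{2n-1})$, using the explicit complete-intersection description of $Z_{\A,\B}$ recalled just above the lemma, and then to degenerate each quadric factor into a sum of two hyperplane factors.

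My first step is to observe that $Z_{\A,\B}$ is cut out in $\P^{2n-1}$ by the $\#\mathcal{L}_{\A,\B}$ linear forms $x_c$ ($c \in \mathcal{L}_{\A,\B}$) together with the $\#\mathcal{Q}_{\A,\B}$ quadratic forms $Q_c$ ($c \in \mathcal{Q}_{\A,\B}$), and that its codimension $2n - m' = \#\mathcal{L}_{\A,\B} + \#\mathcal{Q}_{\A,\B}$ equals the number of defining equations, so these generators form a regular sequence. Consequently
\[
[Z_{\A,\B}]^T \;=\; \prod_{c \in \mathcal{L}_{\A,\B}} [\{x_c=0\}]^T \;\cdot\; \prod_{c \in \mathcal{Q}_{\A,\B}} [\{Q_c=0\}]^T.
\]

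The key step---and the only place where the type $C$ hypothesis really enters---is to establish the identity
\[
[\{Q_c=0\}]^T \;=\; [\{x_c=0\}]^T + [\{x_{2n+1-c}=0\}]^T.
\]
I would prove this by noting that in type $C_n$ the weight of $\+e_j$ is the negative of the weight of $\+e_{2n+1-j}$, so each monomial $x_j x_{2n+1-j}$ has $T$-weight $0$. Both the polynomial $Q_c$ and the single monomial $x_c x_{2n+1-c}$ are therefore $T$-invariant sections of $\cO_{\P^{2n-1}}(2)$; their vanishing loci consequently define divisors with the same equivariant class $c_1^T(\cO(2))$. The desired identity then follows from the equality of Cartier divisors $\{x_c x_{2n+1-c}=0\} = \{x_c=0\} + \{x_{2n+1-c}=0\}$.

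Finally, substituting the identity into the product formula for $[Z_{\A,\B}]^T$ and distributing over $c \in \mathcal{Q}_{\A,\B}$ yields a sum indexed by subsets $\mathcal{I} \subset \mathcal{Q}_{\A,\B}$, whose $\mathcal{I}$-th summand is the product of classes $[\{x_c=0\}]^T$ over $c \in \mathcal{L}_{\A,\B} \cup \mathcal{I} \cup \{2n+1-c : c \in \mathcal{Q}_{\A,\B} \setminus \mathcal{I}\}$. By the definition \eqref{E:nu_I_type_c} of $\sC_\mathcal{I}$, these are exactly the linear equations cutting out the subvariety $Z_{\sC_\mathcal{I},\sC_\mathcal{I}}$, so each summand equals $[Z_{\sC_\mathcal{I},\sC_\mathcal{I}}]^T$. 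The main obstacle I anticipate is carefully justifying the regular-sequence property used in the first step, but it is immediate from the dimension count $\dim Z_{\A,\B} = m'-1$ recorded just above.
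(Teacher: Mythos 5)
Your proposal is correct and follows essentially the same route as the paper: both write $[Z_{\A,\B}]^T$ as the product of the classes of its defining hypersurfaces (using the complete-intersection property from \cite[\S 5]{BKT2}), observe that each quadric is a $T$-invariant section of $\cO_{\P^{2n-1}}(2)$ so its class is $2\zeta=(\zeta+t_c)+(\zeta-t_c)$, and then distribute the product over subsets $\mathcal{I}\subset\mathcal{Q}_{\A,\B}$. The only cosmetic difference is that you phrase the splitting of the quadric class via the degenerate quadric $x_cx_{2n+1-c}$, whereas the paper states the identity $2\zeta=(\zeta+t_c)+(\zeta-t_c)$ directly; the underlying computation is identical.
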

\begin{proof}
 Let $\zeta := c^T_1(\cO_{\P^{2n-1}}(1)) \in H^*_{T}(\P^{2n-1})$ be the first
equivariant Chern class of the dual to the tautological subbundle
on $\P^{2n-1}$.
For any $j \in [1,2n]$, there is a $T$-invariant hyperplane $Z(x_j)$ defined by the single equation $x_j=0$. We notice that
the natural  $T$-equivariant morphism $\cO_{\P^{2n-1}}(-1) \to \C^{2n} \to \C \+e_j$ defines a $T$-equivariant section of
$\cO_{\P^{2n-1}}(1) \otimes \C \+e_j$, whose zero set is just $Z(x_j)$.
Since
\begin{equation*}
\C\+e_j =
\begin{cases}
\rho^*_X\C_{\chi_j} &\text{ if } 1 \leq j \leq n, \text{ and} \\
\rho^*_X\C_{-\chi_{2n+1-j}}  &\text{ if } n+1 \leq j \leq 2n, \\
\end{cases}
\end{equation*}
the equivariant hyperplane class $[Z(x_j)]^{T}$ is  given by
\begin{equation*}
 [Z(x_j)]^{T} = c^{T}_1(\cO_{\P^{2n-1}}(1) \otimes \C \+e_j) =
 \begin{cases}
\zeta+t_j &\text{ for } 1 \leq j \leq n \\
\zeta-t_{2n+1-j} &\text{ for } n+1 \leq j \leq 2n.
\end{cases}
\end{equation*}
Further details can be found in (for example) \cite[Lecture 4]{fuand}.
For any integers $0 < d < c \leq n$, the quadratic polynomial $f_{d,c} := x_{d+1}x_{2n-d} + \ldots + x_{c}x_{2n+1-c}$
defines a nonzero $T$-equivariant section of the line bundle $\cO_{\P^{2n-1}}(1) \otimes \cO_{\P^{2n-1}}(1)$ over $\P^{2n-1}$.
It follows that the equivariant quadric class $[Z(f_{d,c})]^{T}$ satisfies
\begin{equation*}
 [Z(f_{d,c})]^{T} = c^T_1(\cO_{\P^{2n-1}}(1) \otimes \cO_{\P^{2n-1}}(1)) = 2\zeta.
\end{equation*}
Note that $2\zeta = (\zeta + t_c) + (\zeta - t_c)$ for any $c \in [1,n]$.
In particular, we have
\begin{align*}
 \prod_{c \in \mathcal{Q}_{\A,\B}}(2\zeta)
 &= \prod_{c \in \mathcal{Q}_{\A,\B}}((\zeta+t_c) + (\zeta - t_c))
  = \sum_{\mathcal{I} \subset \mathcal{Q}_{\A,\B}} \left( \prod_{c \in \mathcal{I}}(\zeta+t_c)
 \prod_{c \in \mathcal{Q}_{\A,\B}\setminus \mathcal{I}}(\zeta - t_c)   \right).
\end{align*}
It follows that
\begin{align*}
[Z_{\A,\B}]^{T}
&=\prod_{c \in \mathcal{Q}_{\A,\B}}(2\zeta)
\prod_{c \in \mathcal{L}_{\A,\B}} (\zeta + F(\hat t_c))\\
&=\sum_{\mathcal{I} \subset \mathcal{Q}_{\A,\B}} \prod_{c \in \mathcal{I}}(\zeta+t_c)
\prod_{c \in \mathcal{Q}_{\A,\B}\setminus \mathcal{I}}(\zeta - t_c)
\prod_{c \in \mathcal{L}_{\A,\B}} (\zeta + F(\hat t_c)) \\
&= \sum_{\mathcal{I} \subset \mathcal{Q}_{\A,\B}} [Z_{\sC_{\mathcal{I}},\sC_{\mathcal{I}}}]^{T},\\
\end{align*}
where the last equality is due to the fact that $Z_{\sC_{\mathcal{I}},\sC_{\mathcal{I}}}$
is defined by the linear equations $x_c = 0$ for $c \in [1,2n] \setminus \sC_{\mathcal{I}} =
\mathcal{I} \cup \{2n+1-c: c \in \mathcal{Q}_{\A,\B} \setminus \mathcal{I}\} \cup \mathcal{L}_{\A,\B}$.
\end{proof}

The  maximal torus  $T\subset G=Sp(2n, \C)\subset GL(2n, \C)$ acts on $V=\C^{2n}$ by  diagonal matrices $\mbox{diag}\{z_1, z_2, \cdots, z_n, z_n^{-1}, \cdots, z_2^{-1}, z_1^{-1}\}$.
It is embedded into the maximal torus $\hat T\subset GL(2n, \C)$ of  diagonal matrices $\mbox{diag}\{\hat z_1, \dots, \hat z_{2n}\}$.

This embedding induces natural morphisms
$F: H^*_{\hat T}(\mbox{pt}) = \Z[\hat t_1, \ldots, \hat t_{2n}] \to \Z[t_1, \ldots, t_{n}] = H^*_T(\mbox{pt})$ and
${\bar{F}}: H^*_{\hat T}(\mathbb{P}^{2n-1})\to H^*_T(\mathbb{P}^{2n-1})$,
where $F$ is given by\footnote{In particular, the map $F$ is determined by $c^{\hat T}_1(\hat M)\mapsto c^{T}_1(M)$, where $\hat M$ is any one-dimensional representation of $\hat T$ and $M$ is the restriction of this representation to $T$ via the embedding $T \hookrightarrow {\hat T}$.}
\begin{equation*}
\hat t_i \mapsto
\begin{cases}
t_i &\text{ if } i \leq n, \\
-t_{2n+1-i} &\text{ if } i \geq n+1. \\
\end{cases}
\end{equation*}
Furthermore, the embedding $T \hookrightarrow {\hat T}$ induces the following commutative diagram of morphisms:

 \hspace{3cm}  \xymatrix{
  H^*_{\hat T}(\mathbb{P}^{2n-1}) \ar[d]_{{\int_{\mathbb{P}^{2n-1}}^{\hat T}} } \ar[r]^{{\bar{F}}}
                & H^*_{T}(\mathbb{P}^{2n-1}) \ar[d]^{\int_{\mathbb{P}^{2n-1}}^{ T}\hspace{4cm} \displaystyle(\star)}  \\
 H^*_{\hat T}(\mbox{pt})  \ar[r]^{F}
                & H^*_{T}(\mbox{pt})
                }

The simple roots of $GL(2n,\C)$ are given by $\hat{\alpha}_i=\hat{t}_i-\hat{t}_{i+1}$ for $i=1, \cdots, 2n-1$, and
the simple roots of $G$ are given by $\alpha_n=2t_n$ and  $\alpha_i=t_i-t_{i+1}$ for $i=1, \cdots, n-1$.
Clearly, $F$ sends simple roots of $GL(2n,\C)$ to simple roots of $G$.
The Pieri coefficients $N^\B_{\A, p}(X)$ are elements of $\mathbb{Z}_{\geq 0}[-\alpha_1, \cdots, -\alpha_{n}]$, as proven more generally by Graham \cite{Grah}.
We will  express arbitrary Pieri coefficients of type $C_n$ in terms of specializations $F\left(N^{\sC_{\mathcal{I}}}_{\sC_{\mathcal{I}},m+p-m'}\left(Gr(m',2n)\right)\right)$, resulting in a manifestly positive Pieri formula.

Note that any $T$-invariant linear subvariety $L$ of $\P^{2n-1}$ is also invariant under the action of $\hat T$.
By the Borel construction of equivariant cohomology, it follows immediately that ${\bar F}([L]^{\hat T}) = [L]^T$,
where  $[L]^{\hat T}$ denotes the class of $L$ in $H_{\hat T}^*(\P^{2n-1})$.
In particular, since ${\bar F}$ is a ring homomorphism, we have
 \begin{equation}\label{E:borel_construction}
 {\bar F}\left([Z_{\nu_{\mathcal{I},\mathcal{I}}}]^{\hat T} \cdot [\P(E_{n_p})]^{\hat T}\right)
 = [Z_{\nu_{\mathcal{I},\mathcal{I}}}]^{T} \cdot [\P(E_{n_p})]^{T}.
 \end{equation}

\comment{
We will make use of the following well known fact.
\begin{lemma}\label{L:linear_class}
 For any $T$-invariant linear subvariety $L \subset \P^{2n-1}$, we have ${\bar F}([L]^{\hat T}) = [L]^T$. In particular,
 \begin{equation*}
 {\bar F}\left([Z_{\nu_{\mathcal{I},\mathcal{I}}}]^{\hat T} \cdot [\P(E_{n_p})]^{\hat T}\right)
 = [Z_{\nu_{\mathcal{I},\mathcal{I}}}]^{T} \cdot [\P(E_{n_p})]^{T}.
 \end{equation*}
\end{lemma}
\begin{proof}
By the naturality of the equivariant Chern class, we have
\begin{equation*}
{\bar F}([Z(x_j)]^{\hat T}) = {\bar F}\left(c^{\hat T}_1(\cO_{\P^{2n-1}}(1) \otimes \C \+e_j)\right)
=c^T_1(\cO_{\P^{2n-1}}(1) \otimes \C \+e_j)
=[Z(x_j)]^T
\end{equation*}
for any $j \in [1,2n]$.
Any $T$-invariant linear subspace of $\P^{2n-1}$ is a complete intersection of hyperplanes of the form $Z(x_j)$.
The result follows since $\bar F$ is a ring homomorphism.
\end{proof}
}

We now restate and prove Theorem \ref{T:intro},  which reduces arbitrary Pieri coefficients for $SG(m, 2n)$
 to the  specializations of restriction coefficients for $Gr(m',2n)$ via $F$, resulting in a manifestly positive Pieri formula.
\begin{thm}[Equivariant Pieri rule for $SG(m, 2n)$]\label{T:type_c_reduction}
Given $\A, \B\in \mathfrak{S}(X)$ and an integer $1\leq p\leq 2n-m$, we have
$N_{\A, p}^\B=0$ unless  $\A \to \B$  and $|\B| \leq |\A| + p$. When both hypotheses hold,
we define $p':=|\A|+p-|\B|$. We then have
\begin{equation}\label{E:type_c_reduction}
N^\B_{\A,p}(X) = \sum_{\mathcal{I} \subset \mathcal{Q}_{\A,\B}} F\left(N^{\sC_{\mathcal{I}}}_{\sC_{\mathcal{I}},p'}(X')\right).
\end{equation}
Furthermore if  $\B \leq \sym_p$, then
$N^\B_{\A,p}(X) \neq 0$.
\end{thm}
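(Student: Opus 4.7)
The plan is to combine the integral reformulation of Proposition~\ref{P:move_to_projective_space} with the cohomological degeneration of Lemma~\ref{L:type_C_degeneration}, and then reinterpret each resulting summand as the specialization of a type~$A$ restriction coefficient via the commutative diagram~$(\star)$.

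First I would invoke Proposition~\ref{P:move_to_projective_space}: it yields the vanishing assertions (since $N^{\B}_{\A,p}(X)=0$ unless $\A\to\B$, $|\B|\leq|\A|+p$, and $\B\leq\sym_p$), and for $\A\to\B$ it rewrites the coefficient as
\[
N^{\B}_{\A,p}(X)=\int^{T}_{\P^{2n-1}}[Z_{\A,\B}]^{T}\cdot[\P(E_{n_p})]^{T}.
\]
Substituting the decomposition from Lemma~\ref{L:type_C_degeneration} expresses $N^{\B}_{\A,p}(X)$ as a sum over $\mathcal{I}\subset\mathcal{Q}_{\A,\B}$ of integrals of the form $\int^{T}_{\P^{2n-1}}[Z_{\sC_{\mathcal{I}},\sC_{\mathcal{I}}}]^{T}\cdot[\P(E_{n_p})]^{T}$, which already accounts for the indexing on the right-hand side of~\eqref{E:type_c_reduction}.

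The central step is to identify each such summand with $F\bigl(N^{\sC_{\mathcal{I}}}_{\sC_{\mathcal{I}},p'}(X')\bigr)$. To do this, I would first verify the dimension identity $m+p=m'+p'$: the birationality statement in the proof of Proposition~\ref{P:move_to_projective_space} gives $\dim Z_{\A,\B}=|\B|-|\A|+m-1$, while the complete-intersection description of Section~\ref{S:type_c} gives $\dim Z_{\A,\B}=m'-1$, so $m'=m+|\B|-|\A|$; combined with the definition $p'=|\A|+p-|\B|$, this yields $m+p=m'+p'$ and consequently $n_p=2n+1-m'-p'$. Applying Proposition~\ref{P:move_to_projective_space} to the type~$A$ Grassmannian $X'=Gr(m',2n)$ with $\hat T$-equivariance (noting that $\sC_{\mathcal{I}}\to\sC_{\mathcal{I}}$ holds automatically) expresses $N^{\sC_{\mathcal{I}}}_{\sC_{\mathcal{I}},p'}(X')$ as $\int^{\hat T}_{\P^{2n-1}}[Z_{\sC_{\mathcal{I}},\sC_{\mathcal{I}}}]^{\hat T}\cdot[\P(E_{n_p})]^{\hat T}$. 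Applying $F$ and using the commutative diagram~$(\star)$ together with \eqref{E:borel_construction} (which tells us $\bar F$ carries both linear $\hat T$-classes $[Z_{\sC_{\mathcal{I}},\sC_{\mathcal{I}}}]^{\hat T}$ and $[\P(E_{n_p})]^{\hat T}$ to their $T$-equivariant counterparts) transforms the right-hand side into exactly the summand $\int^{T}_{\P^{2n-1}}[Z_{\sC_{\mathcal{I}},\sC_{\mathcal{I}}}]^{T}\cdot[\P(E_{n_p})]^{T}$. Summing over $\mathcal{I}$ yields \eqref{E:type_c_reduction}.

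For the nonvanishing assertion, since $F$ carries simple roots of type~$A_{2n-1}$ to simple roots of type~$C_n$, each summand in~\eqref{E:type_c_reduction} is a Graham-positive polynomial, so the sum admits no cancellation. It therefore suffices to exhibit a single $\mathcal{I}^{*}$ with $N^{\sC_{\mathcal{I}^{*}}}_{\sC_{\mathcal{I}^{*}},p'}(X')\neq 0$. By the nonvanishing criterion at the end of Proposition~\ref{P:redux_A}, this requires $(\sC_{\mathcal{I}^{*}})_1\leq n_p$, and since $\B\leq\sym_p$ forces $\mu_1\leq n_p$, it is enough to arrange $\mu_1\in\sC_{\mathcal{I}^{*}}$. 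Because $\mathcal{Q}_{\A,\B}\subset[2,n]$, one can take $\mathcal{I}^{*}=\emptyset$ when $\mu_1\leq n$, and otherwise put $2n+1-\mu_1$ into $\mathcal{I}^{*}$ (when it lies in $\mathcal{Q}_{\A,\B}$). The main obstacle I anticipate is the bookkeeping in the central step: keeping the parameters $(m,p)$ versus $(m',p')$ and the two tori $T\subset\hat T$ with their integration maps properly aligned so that each summand can be cleanly identified with a specialized restriction coefficient on $X'$.
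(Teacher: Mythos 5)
Your proposal is correct and follows essentially the same route as the paper's proof: Proposition \ref{P:move_to_projective_space} for the vanishing and the integral over $\P^{2n-1}$, the dimension count $m+p=m'+p'$ via the birationality/complete-intersection descriptions, Lemma \ref{L:type_C_degeneration} for the decomposition, and Equation \eqref{E:borel_construction} with diagram $(\star)$ to identify each summand with $F\bigl(N^{\sC_{\mathcal{I}}}_{\sC_{\mathcal{I}},p'}(X')\bigr)$. The only (harmless) deviation is in the nonvanishing argument, where you exhibit a single nonzero Graham-positive summand, whereas the paper observes that $\mu_1=\min(\sC_{\mathcal{I}})$ for every $\mathcal{I}$, so that all $2^{\#\mathcal{Q}_{\A,\B}}$ summands are nonzero.
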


\begin{proof}
Due to Proposition \ref{P:move_to_projective_space}, we can assume $\A \to \B$  and $|\B| \leq |\A| + p$.
By \cite[Propostion 5.1]{BKT2}, we have $\dim(Y_{\A,\B}) + m = \dim(Z_{\A,\B})+1$.  Since
  $\dim(Y_{\A,\B}) = |\B| - |\A|$ and $\dim(Z_{\A,\B})+1 = m'$, we have $p'=m+p-m'$.
Applying
Proposition \ref{P:move_to_projective_space} with $n_p := 2n+1-p-m$,   we  have
\begin{align*}
N^\B_{\A,p}(X)
&=\int^T_{\P^{2n-1}} [Z_{\A,\B}]^{T} \cdot [\P(E_{2n+1-p-m})]^{T} \\
&= \sum_{\mathcal{I} \subset \mathcal{Q}_{\A,\B}} \int^T_{\P^{2n-1}} [Z_{\sC_{\mathcal{I}},\sC_{\mathcal{I}}}]^{T} \cdot [\P(E_{2n+1-p'-m'})]^{T} &\text{ by Lemma \ref{L:type_C_degeneration}} \\
&= \sum_{\mathcal{I} \subset \mathcal{Q}_{\A,\B}} \int^{{T}}_{\P^{2n-1}} {\bar F}([Z_{\sC_{\mathcal{I}},\sC_{\mathcal{I}}}]^{\hat{T}} \cdot [\P(E_{2n+1-p'-m'})]^{\hat{T}}) &\text{ by Equation \ref{E:borel_construction}}\\
&= \sum_{\mathcal{I} \subset \mathcal{Q}_{\A,\B}} F(\int^{\hat{T}}_{\P^{2n-1}} [Z_{\sC_{\mathcal{I}},\sC_{\mathcal{I}}}]^{\hat{T}} \cdot [\P(E_{2n+1-p'-m'})]^{\hat{T}}) &\text{ by diagram } (\star)\\
&= \sum_{\mathcal{I} \subset \mathcal{Q}_{\A,\B}} F({N}^{\sC_{\mathcal{I}}}_{\sC_{\mathcal{I}},p'}(X')).
\end{align*}
For the nonvanishing statement,
note that $\mu_1 = \min(\sC_{\mathcal{I}})$ for any $\mathcal{I} \subset \mathcal{Q}_{\A,\B}$.
As in the type $A$ case, it follows that if $\B \leq \sym_p$, then
$\sC_{\mathcal{I}} \leq \sym_{p'}$,
and hence $F({N}^{\sC_{\mathcal{I}}}_{\sC_{\mathcal{I}},p'}(X')) \neq 0$.
$N^\B_{\A,p}(X)$ is therefore a sum of $2^{\#\mathcal{Q}_{\A,\B}}$ nonzero polynomials
in $\Z_{>0}[t_2-t_1, \ldots, t_n-t_{n-1}, -2t_n]$, and is itself nonzero.
\end{proof}

\begin{example}
Consider $X := SG(3,8)$ with Schubert symbols
$\A = \{2,4,8\}$ and $\B = \{1,3,5\}$, and let $p = 5$.
The Richardson diagram $D(\A,\B)$ is as follows, with
the cuts marked by solid lines:
 \[
\left(
  \begin{array}{cc|cc|cc|cc}
    * & * & 0 & 0 & 0 & 0 & 0 & 0  \\
    0 & 0 & * & * & 0 & 0 & 0 & 0  \\
    0 & 0 & 0 & 0 & * & * & * & *  \\
  \end{array}
\right).
\]
There are no linear equations defining
$Z_{\A,\B}$, and hence the set $\nu(\A, \B) = \{1,2,3,4,5,6,7,8\}$.
Since $\mathcal{Q}_{\A,\B}= \{2,4\}$, we have
the following Schubert symbols for
$X' := Gr(6,8)$:
\begin{align*}
\sC_{\{2,4\}} &= \{1,3,5,6,7,8\},\\
\sC_{\{2\}} &= \{1,3,4,6,7,8\},\\
\sC_{\{4\}} &= \{1,2,3,5,6,8\}, \text{ and} \\
\sC_{\emptyset} &= \{1,2,3,4,6,8\}.
\end{align*}
Thus, we have
\begin{align*}
 N^\B_{\A,5}(X) &= F\left({N}^{\sC_{\{2,4\}}}_{\sC_{\{2,4\}},2}(X') + {N}^{\sC_{\{2\}}}_{\sC_{\{2\}},2}(X')
 + {N}^{\sC_{\{4\}}}_{\sC_{\{4\}},2}(X') + {N}^{\sC_{\emptyset}}_{\sC_{\emptyset},2}(X')\right)\\
 &= F((\hat t_2-\hat t_1)(\hat t_4-\hat t_1)) + F((\hat t_2-\hat t_1)(\hat t_5-\hat t_1)) \\
 &\quad + F((\hat t_4-\hat t_1)(\hat t_7-\hat t_1)) +  F((\hat t_5-\hat t_1)(\hat t_7-\hat t_1))\\
 &= (t_2-t_1)(t_4-t_1) + (t_2-t_1)(-t_4-t_1) \\
 &\quad+ (t_4-t_1)(-t_2-t_1) +  (-t_4-t_1)(-t_2-t_1)\\
 &=4t_1^2.
\end{align*}
We note that in this  example the solution is the square of the highest root.  It will be interesting to find appropriate conditions that simplify the intermediate calculation.
\end{example}

\begin{remark}\label{R:choices}
We mention here that the Pieri formula presented in Theorem \ref{T:type_c_reduction} is just one of several equivalent
formulas that can be derived by our methods.
In particular, let
\[
\mathcal{P} \subset \{c \in [1,n]: \{c,2n+1-c\} \subset \nu(\A,\B)\}
\]
be any subset of cardinality $\#\mathcal{Q}_{\A,\B}$.
Note that $\mathcal{Q}_{\A,\B}$ is itself such a subset.
However, substituting any such $\mathcal{P}$ for $\mathcal{Q}_{\A,\B}$
in \eqref{E:type_c_reduction} and \eqref{E:nu_I_type_c} yields an equivalent formula for $N^{\B}_{\A,p}(X)$.
It is likely that the calculation of certain Pieri coefficients
could be made simpler by using a formula derived from a subset $\mathcal{P} \neq \mathcal{Q}_{\A,\B}$.
We are investigating whether the equivalence of these alternate formulations reveals additional structure of the equivariant cohomology of $X$.
\end{remark}

\section{Type $B$ Pieri Reduction}\label{S:type_b}
Throughout this section, we consider an odd orthogonal Grassmannian  $X=OG(m,2n+1)$.
The special Schubert varieties $X_p$ are indexed by integers $1\leq p\leq 2n-m$, and satisfy
$X_p = \{\Sigma \in X: \dim(\Sigma \cap E_{n_p}) \geq 1\}$, where
\begin{equation*}
n_p :=
\begin{cases}
2n+2-m-p &\text{ if } p \leq n-m,\\
2n+1-m-p &\text{ if } p > n-m.\\
\end{cases}
\end{equation*}
Equivalently, we have $X_p=X_{\sym_p}$, where
\begin{equation*}
\sym_p :=
\begin{cases}
\{n_p\} \cup [2n+3-m,2n+1] &\text{ if } n_p > m-1 \text{, and}\\
(\{n_p\} \cup [2n+2-m,2n+1]) \setminus \{2n+2-n_p\} &\text{ if } n_p \leq m-1.\\
\end{cases}
\end{equation*}

\begin{defn}
Given Schubert symbols $\A$ and $\B$ in $\mathfrak{S}(X)$, we write $\A \to \B$ when
\begin{enumerate}
\item $\B \leq \A$,
\item $\lambda_i \leq \mu_{i+1}$ for $1 \leq i \leq m-1$, and
\item if $\lambda_i = \mu_{i+1}$ for some $i$, then $\mu_j < 2n+2-\lambda_i < \lambda_j$ for some $j$.
\end{enumerate}
\end{defn}

Given Schubert symbols $\B \leq \A$ in $\mathfrak{S}(X)$, we call
$c \in [0,2n+1]$ a \emph{cut} in $D(\A,\B)$ if either
$\lambda_j \leq c < \mu_{j+1}$ or $\lambda_j \leq 2n+1-c < \mu_{j+1}$ for some $j \in [0,m]$.
We notice that $0$ and $2n+1$ are always cuts. Recall that
  $c \in [1,N]$ is a  {zero column} of $D(\A,\B)$ if $\lambda_j < c < \mu_{j+1}$ for some $j$. We set

\begin{align*}
   \mathcal{L}_{\A,\B} &:=\{c \in [1,2n+1]: c \text{ is a zero column in } D(\A,\B)\}\\
                       &\,\,\quad \cup \{c \in [1,2n+1]: \mu_j=2n+2-c=\lambda_j \text{ for some } j \in [1,m]\},\\
   \mathcal{Q}_{\A,\B} &:= \{c \in [2,n+1]: c-1 \text{ is not a cut } \text{, and either } c \text{ is a cut or } c = n+1\}.\\
\end{align*}

Let $x_1,\ldots,x_{2n+1}$ denote the basis of $V^*$ dual to $\+e_1,\ldots,\+e_{2n+1}$.
The projected Richardson variety $Z_{\A,\B}$ is a complete intersection in $\P^{2n-1}$ cut out by the polynomials
\begin{enumerate}
\item $\{x_c: c \in \mathcal{L}_{\A,\B}\}$, and
\item $\{x_{d+1} x_{2n+1-d} + \ldots + x_c x_{2n+2-c}: c \in \mathcal{Q}_{\A,\B}\}$, where $d$ is the largest cut less than $c$.
\end{enumerate}

If $\mathcal{Q}_{\A,\B} \neq \emptyset$, let $\hat{c} \in \mathcal{Q}_{\A,\B}$ be an arbitrary element.
Let $Z'_{\A,\B} \subset \P^{2n}$ denote the subvariety cut out by the same polynomials defining $Z_{\A,\B}$ except the quadratic polynomial corresponding to $\hat{c}$ (namely $x_{\hat{d}+1}x_{2n+1-\hat{d}}+ \ldots + x_{\hat{c}}x_{2n+2-\hat{c}}$, where
$\hat{d}$ is the largest element of $\mathcal{Q}_{\A,\B}$ that is less than $\hat{c}$).
Fixing an integer $1 \leq p \leq 2n-m$, we set the following definitions:
\begin{align*}
\mathcal{Q}'_{\A,\B} &:=
   \begin{cases}
    \mathcal{Q}_{\A,\B} \setminus \{\hat{c}\} &\text{ if } p > n-m \text{ and } \mathcal{Q}_{\A,\B} \neq \emptyset,\\
    \mathcal{Q}_{\A,\B} &\text{ otherwise}.\\
   \end{cases}\\
m' &:= 2n+1 - \#\mathcal{L}_{\A,\B} - \#\mathcal{Q}'_{\A,\B}, \\
X' &:=    Gr(m',2n+1).\\
\end{align*}
Let $\nu(\A,\B) :=  [1,2n+1] \setminus \mathcal{L}_{\A,\B}$.
Let $\nu^+(\A,\B) := \nu(\A,\B) \cup \{n+1\}$.
Finally, for each subset $\mathcal{I} \subset \mathcal{Q'}_{\A,\B}$, we let
\[
\sC_{\mathcal{I}}(\A, \B) := \nu(\A,\B) \setminus \left(\mathcal{I} \cup \{2n+2-c: c \in \mathcal{Q'}_{\A,\B}\setminus \mathcal{I}\}\right).
\]
We will simply denote these sets by $\sC$, $\sC^+$, and $\sC_{\mathcal{I}}$ whenever there is no confusion about $\A,\B$.
Moreover, we will naturally  think of   $\sC_\mathcal{I}$ (resp. $\sC^+$ when  $p > n-m$ and $\mathcal{Q}_{\A,\B} = \emptyset$)  as a Schubert symbol for the type $A$ Grassmannian  $X'$ (resp. $Gr(m'+1,2n+1)$).
We therefore obtain projected Richardson varieties $Z_{\sC_\mathcal{I},\sC_{\mathcal{I}}}$ (resp. $Z_{\sC^+,\sC^+}$)
in $\mathbb{P}^{2n}$, as defined in the case of type $A$ Grassmannians using \eqref{E:projections-new}.

The  maximal   torus $T\subset G=SO(2n+1, \C)$ acts on $V=\C^{2n+1}$ by  diagonal matrices $\mbox{diag}\{z_1, z_2, \cdots, z_n, 1, z_n^{-1}, \cdots, z_2^{-1}, z_1^{-1}\}$.
It is  embedded into a larger torus $(\C^*)^{2n+1}\subset GL(2n+1, \C)$ of diagonal matrices $\mbox{diag}\{\hat z_1, \dots, \hat z_{2n+1}\}$.
This induces a ring homomorphism
$F_B: \Z[\hat t_1, \ldots, \hat t_{2n+1}] \to \Z[t_1, \ldots, t_{n}]$  defined by
\begin{equation*}
\hat{t}_i \mapsto
\begin{cases}
t_i &\text{ if } i \leq n \\
0 &\text{ if } i = n+1 \\
-t_{2n+2-i} &\text{ if } i \geq n+2. \\
\end{cases}
\end{equation*}
 The simple roots of $G$  are given by $\alpha_n=t_n$ and  $\alpha_i=t_i-t_{i+1}$ for $i=1, \cdots, n-1$.
 It is easy to check that the specialization map $F_B$ sends simple roots of $GL(2n+1,\C)$ to simple roots of $G$.
The Pieri coefficients $N^\B_{\A, p}(X)$ are elements of $\mathbb{Z}_{\geq 0}[-\alpha_1, \cdots, -\alpha_{n}]$ by \cite{Grah}.
Using $F_B$, we will  express arbitrary Pieri coefficients for type $B_n$ in terms of specializations of type $A$ coefficients, resulting in a manifestly positive Pieri formula.

We can now state the main result of this section.
\begin{thm}[Equivariant Pieri rule for $OG(m, 2n+1)$]\label{T:type_b_reduction}
Given $\A, \B\in \mathfrak{S}(X)$ and an integer $1\leq p\leq 2n-m$, we have
$N_{\A, p}^\B=0$ unless  $\A \to \B$  and $|\B| \leq |\A| + p$. When both hypotheses hold,
we define $p':= |\A| + p - |\B|$. We then have
\begin{equation*}
N^\B_{\A,p}(X) =
\begin{cases}
\frac{1}{2} F_B\left({N}^{\nu^+}_{\nu^+,p'}(Gr(m'+1,2n+1))\right) &\text{ if } p > n-m  \text{ and } \mathcal{Q}_{\A,\B} = \emptyset, \text{ and} \\
\sum_{\mathcal{I} \subset \mathcal{Q'}} F_B\left({N}^{\nu_{\mathcal{I}}}_{\nu_{\mathcal{I}},p'}(X')\right) &\text{ otherwise}.\\
\end{cases}
\end{equation*}
Furthermore if $\B \leq \sym_p$, then
$N^\B_{\A,p}(X) \neq 0$.
\end{thm}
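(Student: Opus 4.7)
The plan is to extend the type $C$ strategy---Proposition \ref{P:move_to_projective_space} followed by a degeneration of $[Z_{\A,\B}]^T$ into a sum of classes of linear subvarieties---to type $B$. Two new complications arise: the quadric $Z = IG_\omega(1,2n+1) \subset \P^{2n}$ is now a proper hypersurface, and the middle basis vector $\+e_{n+1}$ contributes a coordinate $x_{n+1}$ on which the specialization $F_B$ vanishes. I would first apply Proposition \ref{P:move_to_projective_space} and use the projection formula along the inclusion $Z \hookrightarrow \P^{2n}$ to rewrite $N^\B_{\A,p}(X)$ as an integral over $\P^{2n}$; when $p \leq n-m$ the intersection $Z \cap \P(E_{n_p})$ is transverse, so the integrand carries an extra factor of $[Z]^T = 2\zeta$, while for $p > n-m$ one has $\P(E_{n_p}) \subset Z$ and no such factor appears.

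The heart of the argument is the type-$B$ analog of Lemma \ref{L:type_C_degeneration}. The hyperplane class in $H^*_T(\P^{2n})$ is $[Z(x_j)]^T = \zeta + F_B(\hat t_j)$, so in particular $[Z(x_{n+1})]^T = \zeta$. Each quadric class $2\zeta$ coming from a cut $c < n+1$ in $\mathcal{Q}_{\A,\B}$ splits as $(\zeta + t_c) + (\zeta - t_c)$ just as in type $C$. The middle cut $c = n+1$ (which lies in $\mathcal{Q}_{\A,\B}$ precisely when $n$ is not a cut) carries the polynomial $x_{d+1}x_{2n+1-d} + \cdots + x_{n+1}^2$, whose class $2\zeta$ instead equals $\zeta + \zeta$, corresponding to the doubled middle hyperplane $\{x_{n+1}=0\}$. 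Using these identities I would prove that when either $p \leq n-m$ or $\mathcal{Q}_{\A,\B} \neq \emptyset$,
\[
[Z_{\A,\B}]^T \cdot [\P(E_{n_p})]^T = \sum_{\mathcal{I} \subset \mathcal{Q}'_{\A,\B}} [Z_{\nu_\mathcal{I}, \nu_\mathcal{I}}]^T \cdot [\P(E_{n_p})]^T
\]
in $H^*_T(\P^{2n})$, where each $\nu_\mathcal{I}$ is viewed as a Schubert symbol for $X' = Gr(m',2n+1)$. In the subcase $p > n-m$ with $\mathcal{Q}_{\A,\B} \neq \emptyset$, the quadric at the distinguished cut $\hat c$ is absorbed into $[\P(E_{n_p})]^T$ via the middle hyperplane, accounting for the replacement of $\mathcal{Q}_{\A,\B}$ by $\mathcal{Q}'_{\A,\B} = \mathcal{Q}_{\A,\B} \setminus \{\hat c\}$. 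When $p > n-m$ and $\mathcal{Q}_{\A,\B} = \emptyset$, no quadric is available for such absorption; instead, one enlarges $\nu$ to $\nu^+ = \nu \cup \{n+1\}$, viewed as a Schubert symbol for $Gr(m'+1,2n+1)$, and picks up an overall factor of $\tfrac{1}{2}$ arising from the mismatch between $\zeta$ and $2\zeta$.

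With the degeneration established, I would apply Proposition \ref{P:move_to_projective_space} in reverse for type $A$ to identify each summand as a restriction coefficient $N^{\nu_\mathcal{I}}_{\nu_\mathcal{I},p'}(X')$ (respectively $N^{\nu^+}_{\nu^+,p'}(Gr(m'+1,2n+1))$ in the exceptional case). A type-$B$ analog of diagram $(\star)$ for the torus embedding $T \hookrightarrow \hat T$ then converts these into their images under $F_B$, and because $F_B$ sends simple roots of $GL(2n+1,\C)$ to simple roots of $SO(2n+1,\C)$, the Graham-positivity of the type-$A$ restriction coefficients transfers to $N^\B_{\A,p}(X)$. The nonvanishing claim reduces, just as in type $C$, to verifying $\mu_1 = \min \nu_\mathcal{I} = \min \nu^+$, which forces $\nu_\mathcal{I} \leq \sym_{p'}$ (resp.\ $\nu^+ \leq \sym_{p'}$) whenever $\B \leq \sym_p$, so each summand is a nonzero Graham-positive polynomial.

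The main obstacle will be the careful geometric bookkeeping in the two $p > n-m$ subcases: verifying that the distinguished quadric indexed by $\hat c$ (when $\mathcal{Q}_{\A,\B} \neq \emptyset$) really combines with $[\P(E_{n_p})]^T$ to produce the indicated sum over $\mathcal{I} \subset \mathcal{Q}'_{\A,\B}$ rather than $\mathcal{Q}_{\A,\B}$, and that the factor $\tfrac{1}{2}$ together with the replacement of $\nu$ by $\nu^+$ correctly compensates for the $\zeta$-versus-$2\zeta$ mismatch when no quadric is present. All remaining steps should be direct adaptations of the corresponding arguments in type $C$.
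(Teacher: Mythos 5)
Your proposal is correct and follows essentially the same route as the paper: Proposition \ref{P:move_to_projective_space}, the degeneration of Lemma \ref{L:type_C_degeneration} transported to $\P^{2n}$ (with $F_B(\hat t_{n+1})=0$ so the middle quadric contributes $\zeta+\zeta$), and the identity $[Q\cap\P(E_{n_p})]^T=\tfrac12\iota^*[\P(E_{n_p}\oplus\langle \+e_{n+1}\rangle)]^T$ of Lemma \ref{L:special_as_pullback}. The only (minor) divergence is the subcase $p>n-m$ with $\mathcal{Q}_{\A,\B}\neq\emptyset$, where you absorb $\tfrac12\cdot 2\zeta$ into $[\P(E_{n_p})]^T$ whereas the paper writes $[Z_{\A,\B}]^T=\iota^*[Z'_{\A,\B}]^T$ and drops the quadric at $\hat c$; the paper explicitly notes that your route also works but prefers the latter because it carries over to type $D$, where no analogue of Lemma \ref{L:special_as_pullback} is available.
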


Consider the $(2n-1)$ dimensional quadric $Q := OG(1,2n+1)$, with inclusion
map $\iota:Q \hookrightarrow \P^{2n}$.
By Proposition \ref{P:move_to_projective_space}, we have
$N^{\B}_{\A,p} = \int^T_{Q} [Z_{\A,\B}]^{T} \cdot [Q \cap \P(E_{n_p})]^{T}$, which we shall reduce
further to a calculation on $H^*_T(\P^{2n})$.
Note that $\P(E_{n_p})$ is not contained in $Q$ if and only if $p \leq n-m$, and in this case
$[Q \cap \P(E_{n_p})]^T = \iota^*[\P(E_{n_p})]^T$.  By the following lemma, we can also express $[Q \cap \P(E_{n_p})]^T = [\P(E_{n_p})]^T$ as the pullback of a class in $H^*_T(\P^{2n})$ when $p > n-m$.

\comment{
In order to prove Theorem \ref{T:type_b_reduction} we must examine the equivariant cohomology of the odd $(2n-1)$ dimensional quadric $Q := OG(1,2n+1)$.
For $1 \leq j \leq 2n-1$, there is exactly one Schubert variety $Q_{j} \subset Q$ of codimension $j$,
relative to the flag $E_{\bull}$.
In particular,
 \begin{equation*}
  Q_{j} =
 \begin{cases}
 \P(E_{2n+1-j}) \cap Q & \text{ if } 1 \leq j \leq n -1 \\
 \P(E_{2n-j}) & \text{ if } n \leq j \leq 2n-1. \\
 \end{cases}
\end{equation*}
The special Schubert variety $X_p$  equals $\pi(\psi^{-1}(Q_{m+p-1}))$ for $1 \leq p \leq 2n-m$.
By Proposition \ref{P:move_to_projective_space}, we therefore have
$N^{\B}_{\A,p} = \int^T_{Q} [Z_{\A,\B}]^{T} \cdot [Q_{m+p-1}]^{T}$, which we shall reduce
further to a calculation on $H^*_T(\P^{2n})$.
}

\begin{lemma}\label{L:special_as_pullback}
Given $n-m < p \leq 2n-m$, we have
\begin{equation*}
[Q \cap \P(E_{n_p})]^T =
   \frac{1}{2} \iota^*[\P(E_{n_p} \oplus \langle \+e_{n+1}\rangle)]^T.
\end{equation*}
\end{lemma}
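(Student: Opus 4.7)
The plan is to identify both sides by computing the scheme-theoretic intersection $L \cap Q$, where $L := \P(E_{n_p} \oplus \langle \+e_{n+1}\rangle)$, and then applying the equivariant pullback formula for regular closed embeddings. First, the hypothesis $p > n-m$ forces $n_p = 2n+1-m-p \leq n$, so $E_{n_p} \subseteq E_n$ is isotropic; this gives $\P(E_{n_p}) \subset Q$ and $Q \cap \P(E_{n_p}) = \P(E_{n_p})$, so the left-hand side is literally the class of $\P(E_{n_p})$ in $H^*_T(Q)$.

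The central geometric input is the restriction of the quadratic form $\omega(\+v,\+v) = v_{n+1}^2 + 2\sum_{i \leq n} v_i v_{2n+2-i}$ to $L$. For each $i \leq n_p$ the partner index $2n+2-i$ is at least $n+2$, hence does not occur among the coordinates available on $L$, so every cross term vanishes and $\omega(\+v,\+v)|_L = v_{n+1}^2$. Thus $L \cap Q$ is the ``doubled hyperplane'' in $L$ cut out by $v_{n+1}^2 = 0$, whose underlying reduced scheme is $\P(E_{n_p})$ and whose fundamental cycle is $2[\P(E_{n_p})]$.

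Next, I would invoke the $T$-equivariant pullback formula for the regular embedding $\iota: Q \hookrightarrow \P^{2n}$. Because $L \cap Q$ has the expected codimension in $\P^{2n}$ (namely $2n - n_p + 1$, which equals $\codim_{\P^{2n}}\P(E_{n_p})$), this formula gives $\iota^*[L]^T = [L \cap Q]^T = 2\,[\P(E_{n_p})]^T$ in $H^*_T(Q)$. Dividing by $2$ and using $Q \cap \P(E_{n_p}) = \P(E_{n_p})$ yields the lemma.

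The main obstacle is justifying the refined pullback formula in the $T$-equivariant setting, which is standard but not made explicit in the paper. Should one wish to avoid it, the same equality can be verified by fixed-point localization on $Q$: since $Q$ is a smooth homogeneous projective variety, $H^*_T(Q)$ injects into $\bigoplus_{P \in Q^T} H^*_T(\mathrm{pt})$ by equivariant formality, and the only nontrivial comparison is at $[\+e_i]$ with $i \leq n_p$. There a direct tangent-space calculation shows that the weights of $N_{L/\P^{2n}}$ and of $N_{\P(E_{n_p})/Q}$ at $[\+e_i]$ differ only by the swap of the indices $n+1 \leftrightarrow 2n+2-i$, whose $T$-weights on $\C\+e_{n+1}$ and $\C\+e_{2n+2-i}$ are $0$ and $-t_i$ respectively; the ratio of the two Euler classes is therefore $(-2t_i)/(-t_i) = 2$, which is the source of the factor $\frac{1}{2}$.
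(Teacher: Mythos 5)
Your proof is correct, but it travels the adjunction in the opposite direction from the paper. The paper's argument (attributed to the referee) computes $\iota_*\iota^*[\P(E_{n_p}\oplus\langle \+e_{n+1}\rangle)]^T$ by the projection formula: this equals $[\P(E_{n_p}\oplus\langle \+e_{n+1}\rangle)]^T\cdot\iota_*[Q]^T = [\P(E_{n_p}\oplus\langle \+e_{n+1}\rangle)]^T\cdot 2\zeta$, and since $\zeta=[Z(x_{n+1})]^T$ cuts $\P(E_{n_p}\oplus\langle \+e_{n+1}\rangle)$ transversally in $\P(E_{n_p})$, one gets $2\iota_*[Q\cap\P(E_{n_p})]^T$ and concludes by injectivity of $\iota_*$. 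You instead compute $\iota^*[\P(E_{n_p}\oplus\langle \+e_{n+1}\rangle)]^T$ directly as the cycle of the proper scheme-theoretic intersection, observing that $\omega|_{E_{n_p}\oplus\langle \+e_{n+1}\rangle}$ degenerates to $v_{n+1}^2$, so the intersection is the doubled hyperplane $2\P(E_{n_p})$. The factor of $2$ thus appears in the paper as the degree of the quadric ($[Q]=2\zeta$) and in your argument as the multiplicity of the non-reduced intersection; these are the same fact seen from the two sides of $\iota_*\dashv\iota^*$. Your main route does lean on the equivariant refined Gysin/proper-intersection formula, which is standard but heavier than the paper's elementary push--pull and is not otherwise used in the paper; your localization fallback closes that gap cleanly (the Euler-class ratio $(-2t_i)/(-t_i)=2$ at each fixed point $[\+e_i]$, $i\leq n_p$, is exactly right, and one should just note, as you implicitly do, that $[\+e_{n+1}]\notin Q$ so no other fixed points contribute). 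Either version of your argument is a legitimate alternative to the paper's.
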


\begin{proof}
We thank the referee for the following simplified argument.
As in type $C$, let $\zeta := c^T_1(\cO_{\P^{2n}}(1)) \in H^*_{T}(\P^{2n})$ be the first
equivariant Chern class of the dual to the tautological subbundle
on $\P^{2n}$.
For any $j \in [1,2n+1]$, the corresponding equivariant hyperplane class is defined by
\begin{equation*}
[Z(x_j)]^{T} = \zeta + F_B(\hat{t}_j) =
\begin{cases}
\zeta+t_j &\text{ for } 1 \leq j \leq n \\
\zeta &\text{ for } j = n+1 \\
\zeta-t_{2n+2-j} &\text{ for } n+2 \leq j \leq 2n+1.
\end{cases}
\end{equation*}
For any integers $1 \leq d < c \leq n+1$, the corresponding equivariant quadric class is defined by
\begin{equation*}
 [Z(x_d x_{2n+2-d} + \ldots x_c x_{2n+2-c})]^{T} = 2\zeta.
\end{equation*}
We have
\begin{align*}
&\iota_*\iota^*[\P(E_{n_p} \oplus \langle \+e_{n+1}\rangle)]^T \\
=\hphantom{a} &[\P(E_{n_p} \oplus \langle \+e_{n+1}\rangle)]^T \cdot \iota_*[Q]^T \\
=\hphantom{a} &[\P(E_{n_p} \oplus \langle \+e_{n+1}\rangle)]^T \cdot 2\zeta\\
=\hphantom{a} &2[\P(E_{n_p})]^T \\
=\hphantom{a} &2\iota_*[Q \cap \P(E_{n_p})]^T,
\end{align*}
where the third equality holds because $\zeta = [Z(x_{n+1})]^T$.
Since the pushforward $\iota_*$ is injective, the result follows.
\end{proof}

We can now prove the main result of this section.
\begin{proof}[Proof of Theorem \ref{T:type_b_reduction}]
Due to Proposition \ref{P:move_to_projective_space}, we can assume $\A \to \B$  and $|\B| \leq |\A| + p$.
By \cite[Propostion 5.1]{BKT2}, we have $\dim(Y_{\A,\B}) + m = \dim(Z_{\A,\B})+1$.  Since
$\dim(Y_{\A,\B}) = |\B| - |\A|$ and  $m' = \dim(Z_{\A,\B})+1$ (resp. $\dim(Z_{\A,\B})+2$ if $\mathcal{Q}_{\A,\B} \neq \emptyset \text{ and } p > n-m$), it follows that
$$p'=
\begin{cases}
m+p-m'+1 &\text{ if } \mathcal{Q}_{\A,\B} \neq \emptyset \text{ and } p > n-m,\\
m+p-m' &\text{ otherwise.}\\
\end{cases}$$
We consider the following two cases.

{\bf Case 1}: Suppose $\mathcal{Q}_{\A,\B} = \emptyset$.
In this case $Z_{\A,\B}$ is a linear subvariety of $Q \subset \P^{2n}$ and is equal to $Z_{\nu,\nu}$.
Therefore, by Proposition \ref{P:move_to_projective_space}, we have
$N^{\B}_{\A,p} = \int^T_{Q}[Z_{\nu,\nu}]^{T} \cdot [Q \cap \P(E_{n_p})]^T.$

If $p \leq n-m$, then
$$[Q \cap \P(E_{n_p})]^T= \iota^*[\P(E_{2n+2-m-p})]^T = \iota^*[\P(E_{2n+2-m'-p'})]^T.$$
By the projection formula we conclude $N^{\B}_{\A,p} = F_B\left(N^\nu_{\nu,p'}(X')\right)$.

Now, observe that we must have $n+1 \in \mathcal{L}_{\A,\B}$
(since $n$ and $n+1$ are both cuts and $n+1 \not\in \A \cup \B$).
Recall that $Z(x_{n+1}) \subset \P^{2n}$ is the hyperplane cut out by $x_{n+1} = 0$.
If $p > n-m$, then by Lemma \ref{L:special_as_pullback} we have
\begin{align*}
N^{\B}_{\A,p} &= \frac{1}{2} \int^T_{\P^{2n}}[Z_{\nu,\nu}]^{T} \cdot [\P(E_{n_p} \oplus \langle \+e_{n+1}\rangle)]^T \\
&= \frac{1}{2} \int^T_{\P^{2n}}[Z_{\nu^+,\nu^+}]^{T} \cdot [Z(x_{n+1})]^T \cdot [\P(E_{n_p} \oplus \langle \+e_{n+1}\rangle)]^T \\
&= \frac{1}{2} \int^T_{\P^{2n}}[Z_{\nu^+,\nu^+}]^{T} \cdot  [\P(E_{n_p})]^T \\
&= \frac{1}{2} \int^T_{\P^{2n}}[Z_{\nu^+,\nu^+}]^{T} \cdot  [\P(E_{2n+2-(m'+1)-p')})]^T \\
&= \frac{1}{2} F_B\left({N}^{\nu^+}_{\nu^+,p'}(X')\right).
\end{align*}

{\bf Case 2}: Now suppose $\mathcal{Q}_{\A,\B} \neq \emptyset$.
Note that $Z'_{\A,\B} \cap Q = Z_{\A,\B}$ and $\iota^*[Z'_{\A,\B}]^T = [Z_{\A,\B}]^T \in H^*_T(Q)$.
As in Case 1, we could use Lemma \ref{L:special_as_pullback} along with the projection formula to
move our calculation to $\P^{2n}$ and prove the desired result.
However, we will instead use the fact
that $[Z_{\A,\B}]^T = \iota^*[Z'_{\A,\B}]^T$ along with the projection formula to move to $\P^{2n}$.
The advantage of this alternate proof is that it generalizes immediately to the type $D$ case,
where there is no analogue of Lemma \ref{L:special_as_pullback}.

Since $\P(E_{n_p}) \subset Q$ for $p > n-m$, we have
\begin{equation*}
\iota_*[\P(E_{n_p}) \cap Q]^{T} =
\begin{cases}
[\P(E_{n_p})]^T \cdot [Q]^T  &\text{ if } p \leq n-m\\
[\P(E_{n_p})]^T  &\text{ if } p > n-m\\
\end{cases}
\end{equation*}
We therefore have
\begin{align*}
N^\B_{\A,p}(X) &= \int^T_{Q} [Z_{\A,\B}]^{T} \cdot [\P(E_{n_p}) \cap Q]^{T} \\
&=
\begin{cases}
\int^T_{\P^{2n}} [Z'_{\A,\B}]^{T} \cdot [\P(E_{n_p})]^T \cdot [Q]^T &\text{ if } p \leq n-m\\
\int^T_{\P^{2n}} [Z'_{\A,\B}]^{T} \cdot [\P(E_{n_p})]^T &\text{ if } p > n-m\\
\end{cases}\\
&=
\begin{cases}
\int^T_{\P^{2n}} [Z_{\A,\B}]^{T} \cdot [\P(E_{n_p})]^T &\text{ if } p \leq n-m\\
\int^T_{\P^{2n}} [Z'_{\A,\B}]^{T} \cdot [\P(E_{n_p})]^T &\text{ if } p > n-m\\
\end{cases}\\
&= \int^T_{\P^{2n}}\sum_{\mathcal{I} \subset \mathcal{Q'}_{\A,\B}} [Z_{\nu_\mathcal{I},\nu_\mathcal{I}}]^T \cdot [\P(E_{2n+2-m'-p'})]^T \\
&= \sum_{\mathcal{I} \subset \mathcal{Q'}_{\A,\B}} F_B\left({N}^{\sC_{\mathcal{I}}}_{\sC_{\mathcal{I}},p'}(X')\right),
\end{align*}
where the subvarieties $Z_{\nu_\mathcal{I},\nu_\mathcal{I}}$ are defined as in \eqref{E:projections-new}.
The fact that $[Z_{\A,\B}]^{T}$ (resp. $[Z'_{\A,\B}]^{T}$ for $p > n-m$) is equal to $\sum_{\mathcal{I} \subset \mathcal{Q'}_{\A,\B}} [Z_{\nu_\mathcal{I},\nu_\mathcal{I}}]^T$ follows from Lemma \ref{L:type_C_degeneration}.
For the nonvanishing statement, let $\sym$ be any of the Schubert symbols $\nu_{\mathcal{I}}$ in Cases $1$ and $3$ (resp. $\nu^+$ in Case $2$),
and note that $\mu_1 = \min(\sym)$.
As in the type $A$ case, it follows that if $\B \leq \sym_p$, then
$\sym \leq \sym_{p'}$,
and hence $F_B({N}^{\sym}_{\sym,p'}(X')) \neq 0$.
In Cases $1$ and $3$, it follows that $N^\B_{\A,p}(X)$ is a sum of $2^{\#\mathcal{Q}'_{\A,\B}}$ polynomials
in $\Z_{>0}[t_2-t_1, \ldots, t_n-t_{n-1}, -t_n]$, and in particular is nonzero.
\end{proof}

\begin{remark}\label{R:type_b_restrictions}
When $p > n-m$ and $\mathcal{Q}_{\A,\B} = \emptyset$, the
Pieri coefficient $N^{\mu}_{\la,p}(X) = \frac{1}{2} F_B({N}^{\nu^+}_{\nu^+,p'}(X'))$
is indeed an element of $\Z[t_1,\ldots,t_n]$,
since $2$ divides $\iota^*[\P(E_{n_p} \oplus \langle \+e_{n+1}\rangle)]^T$
by Lemma \ref{L:special_as_pullback}.
By Lemma \ref{L:restriction_coef_of_same_Lie_type},
we will see that $N^{\mu}_{\la,p}(X)$ is equal to the
type $B$ restriction coefficient $N^{\nu}_{\nu,p'}(OG(m',2n+1))$.
In particular, any restriction coefficient of Lie type $B$
is the specialization of a restriction coefficient of Lie type $A$, up to a factor of $2$.
\end{remark}

\begin{example}
 Let $X := OG(2,7)$, let $p = 3$, and consider Schubert symbols $\A = \{3,6\}$ and $\B = \{1,6\}$.
 Note that $p > n-m = 1$.
 We have $\mathcal{L}_{\A,\B} = \{2,4,5,7\}$, $\mathcal{Q}_{\A,\B} = \emptyset$, $m' = 3$, $p' = 2$, $X' = Gr(4,7)$,
 and $\nu^+ = \{1,3,4,6\}$.  Using Lemma \ref{L:restriction_formula2}, we calculate
 $N^{\nu^+}_{\nu^+,2}(X') = (\hat{t}_5-\hat{t}_1)(\hat{t}_7-\hat{t}_1)$.  Hence $N^\B_{\A,3} = \frac{1}{2}(-t_3-t_1)(-2t_1) = (-t_3-t_1)(-t_1)$.
\end{example}

\section{Type $D$ Pieri Reduction}\label{S:type_d}
Throughout this section, we consider an even orthogonal  Grassmannian  $X=OG(m,2n)$, where $m \leq n$.
We note that our results apply equally well to $OG(n,2n)$, which has two connected components, and to $OG(n-1,2n)$, which is realized as a homogeneous space by quotienting $SO(2n,\C)$ by a \emph{submaximal} parabolic subgroup.
 For each $1 \leq p \leq 2n-m-1$, the special Schubert variety $X_p$ has codimension $p$ and satisfies
$X_p = \{\Sigma \in X: \dim(\Sigma \cap E_{n_p}) \geq 1\}$, where
\begin{equation*}
n_p :=
\begin{cases}
2n+1-m-p &\text{ if } p < n-m \\
2n - m - p &\text{ if } p \geq n-m.\\
\end{cases}
\end{equation*}
Equivalently, we have $X_p = X_{\sym_p}$, where
 \begin{equation*}
 \sym_p :=
 \begin{cases}
\{n_p\} \cup [2n+2-m,2n] &\text{ if } n_p > m-1, \\
(\{n_p\} \cup [2n+1-m,2n]) \setminus \{2n+2-n_p\} &\text{ if } n_p \leq m-1, \\
 \end{cases}
\end{equation*}
We mention here that there is an additional special Schubert variety
$\tilde{X}_{n-m}$ of codimension $n-m$ in $X$.  We will discuss multiplication by the Schubert class $[\tilde{X}_{n-m}]^T$ in Remark \ref{R:additional_schubert_class_type_d}.

Following \cite[\S A]{BKT4} we make the following definitions.
For any Schubert symbol $\A \in \mathfrak{S}(X)$, let
$[\A] = \A \cup  \{c \in [1,2n]: 2n+1-c \in \A\}$.
We define
$\type(\A) \in \{0,1,2\}$ as follows.
If $n \in [\A]$,
then we let $\type(\A)$
be congruent mod $2$ to $1$ plus the
number of
elements in $[1,n] \setminus \A$.  In other words,
if $\#([1,n] \setminus \A)$ is even
then $\type(\A)=1$, and
if $\#([1,n] \setminus \A)$ is odd
then $\type(\A)=2$.
Finally, if  $n \not\in [\A]$,
we set $\type(\A)=0$.

In type $D$, the Bruhat order is characterized by a stricter relation on the Schubert symbols.
\begin{defn}
Given Schubert symbols $\A$ and $\B$ in $\mathfrak{S}(OG(m,2n))$,
we write $\B \preceq \A$ if the following conditions hold:
\begin{enumerate}
\item $\B \leq \A$, and
\item if there exists $c \in [1,n-1]$ such that $[c+1,n] \subset [\A] \cap [\B]$
and $\#\A \cap [1,c] = \#\B \cap [1,c]$, then
we have $\type(\A) = \type(\B)$.
\end{enumerate}
\end{defn}
By  \cite[Proposition A.2]{BKT4}, we have $\B \preceq \A$ if and only if $X_\B \subset X_\A$.
\begin{defn}
Given Schubert symbols $\A$ and $\B$ in $\mathfrak{S}(X)$, we write $\A \to \B$ when
\begin{enumerate}
\item $\B \preceq \A$;
\item $\lambda_i \leq \mu_{i+1}$ for $1 \leq i \leq m-1$, unless $\lambda_i = n+1$ and $\mu_{i+1} = n$;
\item if $\lambda_i = \mu_{i+1}$ for some $i$, then $\mu_j < 2n+2-\lambda_i < \lambda_j$ for some $j$; and
\item it is not the case that $\lambda_i = \mu_{i+1} \in \{n,n+1\}$ for any $1 \leq i \leq m-1$.
\end{enumerate}
\end{defn}

\begin{remark}
Given arbitrary Schubert symbols $\A, \B \in \mathfrak{S}(X)$,
the definition of a cut in $D(\A,\B)$ is significantly more complicated in type $D$ (see \cite{Ravi} for details).
However, assuming $\A \to \B$, the situation is simpler.
\end{remark}

Given Schubert symbols $\A \to \B \in \mathfrak{S}(X)$, we call
$c \in [0,2n]$ a \emph{cut} in $D(\A,\B)$ if either
$\lambda_j \leq c < \mu_{j+1}$ or $\lambda_j \leq 2n-c < \mu_{j+1}$ for some $j \in [0,m]$.
We notice that $0$ and $2n$ are always cuts. Recall that
  $c \in [1,2n]$ is a  {zero column} of $D(\A,\B)$ if $\lambda_j < c < \mu_{j+1}$ for some $j$.
The following definitions also hold only for Schubert symbols $\A \to \B$:
\begin{align*}
   \mathcal{L}_{\A,\B} :=& \{c \in [1,2n]: c \text{ is a zero column in } D(\A,\B)\}\\
                       &\cup \{c \in [1,2n]:
                       \begin{aligned}[t]
                       &\exists j \in [1,m] \text{ such that } 2n+1-c=\mu_j=\lambda_j \\
                       &\text{or }2n+1-c=n+1=\lambda_j<\mu_{j+1} \\
                       &\text{or } 2n+1-c=n=\mu_j>\lambda_{j-1}\},
                       \end{aligned}\\
   \mathcal{Q}_{\A,\B} := &\{c \in [2,n]: c-1 \text{ is not a cut } \text{, and either } c \text{ is a cut or } c = n\}.\\
\end{align*}
Let $x_1,\ldots,x_{2n}$ denote the basis of $V^*$ dual to $\+e_1,\ldots,\+e_{2n}$.
The projected Richardson variety $Z_{\A,\B}$ is a complete intersection in $\P^{2n-1}$ cut out by the polynomials
\begin{enumerate}
\item $\{x_c: c \in \mathcal{L}_{\A,\B}\}$, and
\item $\{x_{d+1} x_{2n-d} + \ldots + x_c x_{2n+1-c}: c \in \mathcal{Q}_{\A,\B}\}$, where $d$ is the largest cut less than $c$.
\end{enumerate}

If $\mathcal{Q}_{\A,\B} \neq \emptyset$, let $\hat{c} \in \mathcal{Q}_{\A,\B}$ be an arbitrary element.
Let $Z'_{\A,\B} \subset \P^{2n-1}$ denote the subvariety cut out by the same polynomials defining $Z_{\A,\B}$ except the quadratic polynomial corresponding to $\hat{c}$.
Fixing an integer $1 \leq p \leq 2n-m-1$, we set the following definitions:
\begin{align*}
\mathcal{Q}'_{\A,\B} &:=
\begin{cases}
\mathcal{Q}_{\A,\B} \setminus \{\hat{c}\} &\text{ if } \mathcal{Q}_{\A,\B} \neq \emptyset \text{ and } p \geq n-m,\\
\mathcal{Q}_{\A,\B} &\text{ otherwise}.\\
\end{cases}\\
m' &:= 2n - \#\mathcal{L}_{\A,\B} - \#\mathcal{Q}'_{\A,\B}, \\
X' &:= Gr(m',2n).\\
\end{align*}
Let $\nu(\A,\B) :=  [1,2n] \setminus \mathcal{L}_{\A,\B}$.
For each subset $\mathcal{I} \subset \mathcal{Q'}_{\A,\B}$, we let
\[
\sC_{\mathcal{I}}(\A, \B) := \nu(\A,\B) \setminus \left(\mathcal{I} \cup \{2n+1-c: c \in \mathcal{Q'}_{\A,\B}\setminus \mathcal{I}\}\right).
\]
We will simply denote these sets as $\sC$ and $\sC_{\mathcal{I}}$ whenever there is no confusion about $\A,\B$.
Moreover, we shall naturally  think of   $\sC_\mathcal{I}$   as a Schubert symbol
for the type $A$ Grassmannian  $X' = Gr(m',2n)$, by noting that  the set  $\sC_{\mathcal{I}}$  always has  cardinality
$m'$ for any $\mathcal{I}$.
We therefore obtain projected Richardson varieties $Z_{\sC_\mathcal{I},\sC_{\mathcal{I}}}\subset \mathbb{P}^{2n-1}$ as defined in the case of type $A$ Grassmannians using \eqref{E:projections-new}.

The  maximal   torus $T\subset G=SO(2n, \C)$ acts on $V=\C^{2n}$ by  diagonal matrices $\mbox{diag}\{z_1, z_2, \cdots, z_n, z_n^{-1}, \cdots, z_2^{-1}, z_1^{-1}\}$.
It is embedded in a larger torus $(\C^*)^{2n}\subset GL(2n, \C)$ of diagonal matrices $\mbox{diag}\{\hat z_1, \dots, \hat z_{2n}\}$.
This induces a ring homomorphism
$F_D: \Z[\hat t_1, \ldots, \hat t_{2n}] \to \Z[t_1, \ldots, t_{n}]$  defined by
\begin{equation*}
\hat{t}_i \mapsto
\begin{cases}
t_i &\text{ if } i \leq n \\
-t_{2n+1-i} &\text{ if } i \geq n+1. \\
\end{cases}
\end{equation*}
The simple roots of $G$  are given by $\alpha_n=t_{n-1}+t_n$ and  $\alpha_i=t_i-t_{i+1}$ for $i=1, \cdots, n-1$.
The map   $F_D$  sends positive roots of $GL(2n, \C)$ to positive roots of $G$, with the sole exception of the root $\hat{\alpha}_{n} = \hat{t}_n - \hat{t}_{n+1}$, which is sent to $2t_n$.
However, using Corollary \ref{C:bad_terms_do_not_occur}, we will ensure that
specializations of the root $\hat{\alpha}_{n}$ never occur in our Pieri rule.
We can now state the main result of this section.
\begin{thm}[Equivariant Pieri rule for $OG(m, 2n)$]\label{T:type_d_reduction}
Given $\A, \B\in \mathfrak{S}(X)$ and an integer $1\leq p\leq 2n-m-1$, we have
$N_{\A, p}^\B=0$ unless  $\A \to \B$  and $|\B| \leq |\A| + p$. When both hypotheses hold,
we define  $p':=|\A|+p-|\B| \geq 0$. We then have
\begin{equation}\label{E:type_d_pieri_part1}
N^\B_{\A,p}(X) =
\begin{cases}
N^{\nu}_{\nu,p'}(OG(m',2n)) &\text{ if } \mathcal{Q}_{\A,\B} = \emptyset \text{ and } p \geq n-m, \\
\sum_{\mathcal{I} \subset \mathcal{Q}'_{\A,\B}} F_D\left({N}^{\nu_{\mathcal{I}}}_{\nu_{\mathcal{I}},p'}(X')\right) &\text{ otherwise}.\\
\end{cases}
\end{equation}
Furthermore if  $\B \preceq \sym_p$, then
$N^\B_{\A,p}(X) \neq 0$.
\end{thm}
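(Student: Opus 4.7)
The plan is to follow the two-case strategy of Theorem \ref{T:type_b_reduction}, with the odd quadric replaced by $Q := OG(1,2n) \subset \P^{2n-1}$, and with an additional subtlety in one exceptional case. Start by invoking Proposition \ref{P:move_to_projective_space} to write
\[
N^\B_{\A,p}(X) = \int^T_Q [Z_{\A,\B}]^T \cdot [Q \cap \P(E_{n_p})]^T,
\]
and verify the stated values of $p'$ using \cite[Proposition 5.1]{BKT2}. The vanishing when $\A \not\to \B$ or $|\B| > |\A| + p$ follows immediately.

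For the generic case (not $\mathcal{Q}_{\A,\B} = \emptyset$ and $p \geq n-m$ simultaneously), I would adopt ``Case 2'' of the type $B$ proof: use the identity $[Z_{\A,\B}]^T = \iota^*[Z'_{\A,\B}]^T$ (or $\iota^*[Z_{\A,\B}]^T$ when $\mathcal{Q}_{\A,\B} = \emptyset$) together with the projection formula to lift the computation to $\P^{2n-1}$. When $p < n-m$, $\P(E_{n_p}) \not\subset Q$, so $\iota_*[Q \cap \P(E_{n_p})]^T = [Q]^T \cdot [\P(E_{n_p})]^T$ and the factor $[Q]^T$ pairs back with $Z_{\A,\B}$; when $p \geq n-m$ (which forces $\mathcal{Q}_{\A,\B} \neq \emptyset$ in this case), $\P(E_{n_p}) \subset Q$ gives $\iota_*[Q \cap \P(E_{n_p})]^T = [\P(E_{n_p})]^T$, and we pair against $Z'_{\A,\B}$ instead, to avoid double-counting. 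In either subcase, the degeneration argument of Lemma \ref{L:type_C_degeneration} transfers to type $D$ by replacing $2n+1-c$ with $2n-c$ throughout, giving a decomposition $\sum_{\mathcal{I} \subset \mathcal{Q}'_{\A,\B}} [Z_{\nu_\mathcal{I},\nu_\mathcal{I}}]^T$ into classes of type $A$ projected Richardson varieties. Pulling the $F_D$ specialization outside the integral via the analogue of diagram $(\star)$ from Section \ref{S:type_c} then yields the desired sum of $F_D$-images of type $A$ restriction coefficients.

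The main obstacle is the exceptional case $\mathcal{Q}_{\A,\B} = \emptyset$ and $p \geq n-m$, where $Z_{\A,\B}$ is already a linear subvariety of $Q$, $\P(E_{n_p}) \subset Q$, and no type $D$ analogue of Lemma \ref{L:special_as_pullback} is available to lift $[\P(E_{n_p})]^T$ to $\P^{2n-1}$. My plan here is to remain inside $Q$: after identifying $\nu$ as a Schubert symbol for $OG(m',2n)$, observe that $Z_{\A,\B} = Z_{\nu,\nu}$ as a linear subvariety of $Q$, and that the intersection of $\P(E_{n_p})$ with $OG(m',2n)$ realizes (under the two-step projection) the codimension-$p'$ special Schubert subvariety in the smaller orthogonal Grassmannian. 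Running Proposition \ref{P:move_to_projective_space} in reverse then identifies the integral with the type $D$ restriction coefficient $N^\nu_{\nu,p'}(OG(m',2n))$, as claimed.

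For the nonvanishing statement, the argument parallels types $A$ and $C$: since $\min(\nu_\mathcal{I}) = \mu_1$ for every $\mathcal{I}$ (and $\min(\nu) = \mu_1$ in the exceptional case), $\B \preceq \sym_p$ forces $\nu_\mathcal{I} \preceq \sym_{p'}$, making each contribution nonzero. Graham-positivity of the final sum additionally requires that the anomalous image $F_D(\hat{\alpha}_n) = 2t_n$ not appear in any contributing restriction coefficient $N^{\nu_\mathcal{I}}_{\nu_\mathcal{I},p'}(X')$; this is precisely what Corollary \ref{C:bad_terms_do_not_occur} is invoked to guarantee.
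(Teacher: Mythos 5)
Your proposal is correct and follows essentially the same route as the paper: the generic cases are handled exactly as Cases 1 and 2 of the type $B$ proof (with $\P^{2n}$ replaced by $\P^{2n-1}$ and the inequalities shifted, the quadrics in $\P^{2n-1}$ having literally the same form as in type $C$ so Lemma \ref{L:type_C_degeneration} applies verbatim), and your treatment of the exceptional case $\mathcal{Q}_{\A,\B}=\emptyset$, $p\ge n-m$ is precisely the content of the paper's Lemma \ref{L:restriction_coef_of_same_Lie_type}. The one detail you assert rather than verify is that $\nu$ really is a Schubert symbol for $OG(m',2n)$, i.e.\ that $\#\nu\le n$ and $\nu$ contains no pair $\{c,2n+1-c\}$; this holds because when $\mathcal{Q}_{\A,\B}=\emptyset$ every column is a cut, which forces $\{c,2n+1-c\}\cap\mathcal{L}_{\A,\B}\neq\emptyset$ for every $c$.
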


The following lemma holds for all isotropic Grassmannians.  We will
need it to prove Theorem \ref{T:type_d_reduction} in the case $\mathcal{Q}_{\A,\B} = \emptyset$ and $p \geq n-m$.
\begin{lemma}\label{L:restriction_coef_of_same_Lie_type}
Let $IG_{\omega}(m,N)$ be a Grassmannian of Lie type $D_n$ (respectively $B_n$ or $C_n$), and
suppose $\A\to \B$ and $1 \leq p \leq 2n-m-1$ (respectively $1 \leq p \leq 2n-m$).
If $\mathcal{Q}_{\A,\B} = \emptyset$, then we  have $\#\nu\leq n$,   where $\nu=\nu(\A, \B)= [1,N] \setminus \mathcal{L}_{\A,\B}$.
Furthermore if   $|\B| \leq |\A| + p$, then
$$N^{\B}_{\A,p}(IG_{\omega}(m,N)) = N^{\nu}_{\nu, |\A|+p-|\B|}(IG_{\omega}(\#\nu,N)).$$
\end{lemma}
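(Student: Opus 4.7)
The plan is to apply Proposition \ref{P:move_to_projective_space} to both sides of the claimed identity and to identify the two resulting integrals over $Z := IG_\omega(1,N) \subset \P^{N-1}$.

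First I would use the hypothesis $\mathcal{Q}_{\A,\B} = \emptyset$ to observe that $Z_{\A,\B}$ is cut out in $\P^{N-1}$ by the linear equations $\{x_c = 0 : c \in \mathcal{L}_{\A,\B}\}$ alone, so $Z_{\A,\B} = \P(\Sigma_\nu)$, where $\Sigma_\nu := \langle \+e_c : c \in \nu \rangle$. Since $Z_{\A,\B} \subset Z$, every line in $\Sigma_\nu$ is isotropic, forcing $\Sigma_\nu$ itself to be an isotropic subspace of $V$. Because the maximal isotropic subspace of $V$ has dimension $n$ in each of the Lie types $B_n$, $C_n$, and $D_n$, we conclude $\#\nu = \dim \Sigma_\nu \leq n$, proving the first assertion. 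This also shows that $\nu$ is a legitimate Schubert symbol for $X'' := IG_\omega(\#\nu, N)$, with associated $T$-fixed point $\Sigma_\nu$.

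Next, the Richardson variety $Y_{\nu,\nu}$ in $X''$ is the singleton $\{\Sigma_\nu\}$, so
\[
Z_{\nu,\nu} = \psi(\pi^{-1}(\Sigma_\nu)) = \P(\Sigma_\nu) = Z_{\A,\B}
\]
as $T$-invariant subvarieties of $Z$, and in particular $[Z_{\A,\B}]^T = [Z_{\nu,\nu}]^T$ in $H^*_T(Z)$.

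It remains to verify that the ``hyperplane'' factor appearing in Proposition \ref{P:move_to_projective_space} agrees on the two sides, i.e.\ that $n_p$ computed for $(m,p)$ in $X$ equals $n_{p'}$ computed for $(\#\nu, p')$ in $X''$. By \cite[Proposition 5.1]{BKT2}, $\dim(Z_{\A,\B}) + 1 = \dim(Y_{\A,\B}) + m = |\B| - |\A| + m$, and since $\dim(Z_{\A,\B}) = \#\nu - 1$ we obtain $\#\nu = m + |\B| - |\A|$, and hence $m + p = \#\nu + p'$. Inspecting the Lie-type-dependent definitions of $n_p$ given at the start of Sections \ref{S:type_c}, \ref{S:type_b}, and \ref{S:type_d}, this arithmetic identity forces $n_p = n_{p'}$; in types $B$ and $D$, it also implies that the piecewise branching condition $p \leq n-m$ holds if and only if $p' \leq n-\#\nu$, so both sides select the same branch. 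Proposition \ref{P:move_to_projective_space} then yields
\[
N^\B_{\A,p}(X) = \int_Z^T [Z_{\A,\B}]^T \cdot [Z \cap \P(E_{n_p})]^T = \int_Z^T [Z_{\nu,\nu}]^T \cdot [Z \cap \P(E_{n_{p'}})]^T = N^\nu_{\nu,p'}(X'').
\]
The only mildly delicate step is the piecewise case analysis of $n_p$ in types $B$ and $D$, but this is handled uniformly by the identity $m + p = \#\nu + p'$.
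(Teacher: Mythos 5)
Your handling of the second assertion is correct and matches the paper's route: both identify $Z_{\A,\B}$ with $Z_{\nu,\nu}$ and conclude via Proposition \ref{P:move_to_projective_space}. You are in fact more careful than the paper on one point, namely verifying that the piecewise definitions of $n_p$ select the same branch on both sides via $m+p=\#\nu+p'$; the paper leaves this implicit. The genuine problem is in your proof of the first assertion, and it is confined to type $C$. The step ``every line in $\Sigma_\nu$ is isotropic, forcing $\Sigma_\nu$ itself to be an isotropic subspace'' is a polarization argument that is only valid for a \emph{symmetric} form. In types $B$ and $D$ it works: $Z=OG(1,N)$ is a genuine quadric, $\P(\Sigma_\nu)=Z_{\A,\B}\subset Z$ forces the quadratic form to vanish identically on $\Sigma_\nu$, and polarization gives $\omega|_{\Sigma_\nu\times\Sigma_\nu}=0$, hence $\#\nu\le n$ and the fact that $\nu$ contains no pair $\{c,N+1-c\}$ (which is what makes $\nu$ a Schubert symbol for $IG_\omega(\#\nu,N)$). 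In type $C$ the form is skew-symmetric, every vector is isotropic, $Z=IG_\omega(1,2n)=\P^{2n-1}$, and the containment $Z_{\A,\B}\subset Z$ carries no information; your argument therefore does not exclude $\nu$ containing a pair $\{c,2n+1-c\}$, which is exactly what both conclusions require. The paper instead proves this combinatorially (reproducing \cite[Lemma 4.14]{Ravi}): from $\mathcal{Q}_{\A,\B}=\emptyset$ it deduces that every column is a cut, and then that each pair $\{c,N+1-c\}$ meets $\mathcal{L}_{\A,\B}$.

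You should also be aware that the type $C$ case is genuinely delicate, not just a patchable omission. In $SG(1,4)$ with $\A=\{4\}$, $\B=\{1\}$ one has $\A\to\B$ and $\mathcal{Q}_{\A,\B}=\emptyset$ (the cuts are $\{0,4\}$, so the only candidate $c=2$ is not a cut), yet $\mathcal{L}_{\A,\B}=\emptyset$ and $\#\nu=4>n=2$. So the first step of the paper's own argument (``$\mathcal{Q}_{\A,\B}=\emptyset$ implies every $c\in[1,N]$ is a cut'') also fails verbatim in type $C$, where $\mathcal{Q}_{\A,\B}$, unlike in types $B$ and $D$, lacks the extra clause ``or $c=n$''. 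Since the lemma is invoked in the paper only for types $B$ and $D$, your geometric argument actually covers every case in which the lemma is used, and there it is an attractive alternative to the combinatorics; but as a proof of the statement as written it is incomplete, and in type $C$ you would either need the stronger hypothesis that every column is a cut or the paper's combinatorial lemma in its correct form.
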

\begin{proof}
If $\mathcal{Q}_{\A,\B} = \emptyset$, then $c$ is a cut in $D(\A,\B)$ for
any $c \in [1,N]$.
We then claim that $\{c, N+1-c\} \cap \mathcal{L}_{\A,\B} \neq \emptyset$
for any $c \in [1,N]$.
To see why, we reproduce the argument from \cite[Lemma 4.14]{Ravi}:
If we are working in type $B$ and $c = n + 1$, then $n + 1$ must be a zero
column, and hence be in $\mathcal{L}_{\A,\B}$ . Otherwise, let us consider $c \leq N/2$.
Since $c-1$ is a cut, we have $\lambda_i \leq c - 1 < \mu_{i+1}$  (or $\lambda_i \leq N + 1 - c < \mu_{i+1}$) for some $i$. Then  $c$ (resp. $N + 1 - c$) is a zero column, in which case we are done; or we have $c = \mu_{i+1} \in \B$  (resp. $N + 1 - c = \lambda_i \in \A$).
It follows that $N+1-c \in \mathcal{L}_{\A,\B}$ (resp. $c \in \mathcal{L}_{\A,\B}$), proving our claim.
We therefore have $\#\nu=N-\#\mathcal{L}_{\A,\B} \leq n$.

As a consequence,   $\nu$  can be treated as   a Schubert symbol for $IG_{\omega}(\#\nu,N)$.
We now  consider $Z_{\nu,\nu} \subset \P^{N-1}$ to be the projected Richardson variety
coming from the Richardson variety $Y'_{\nu,\nu} \subset IG_{\omega}(\#\nu,N)$.
As in Lemma \ref{L:same_zero_columns}, we then see that $Z_{\nu,\nu} = Z_{\A,\B}$.
Since $|\A|+p-|\B|\geq 0$, we have
$N^{\B}_{\A,p}(IG_{\omega}(m,N)) = N^{\nu}_{\nu,|\A|+p-|\B|}(IG_{\omega}(\#\nu,N))$ by Proposition \ref{P:move_to_projective_space}.
\end{proof}

The proof of our main theorem follows easily.
\begin{proof}[Proof of Theorem \ref{T:type_d_reduction}]
We note that as in the proof of Theorem \ref{T:type_b_reduction}, we have
$p = m+p-m'+1$ if $\mathcal{Q}_{\A,\B} \neq \emptyset$ and $p \geq n-m$, and
$p' = m+p-m'$ otherwise.
If $\mathcal{Q}_{\A,\B}=\emptyset$ and $p \geq n-m$ then we are done by Lemma \ref{L:restriction_coef_of_same_Lie_type}.
Let $Q := OG(1,2n)$ denote the $(2n-2)$ dimensional quadric of isotropic lines, with inclusion $\iota:Q \hookrightarrow \P^{2n-1}$.
Note that $[Q \cap \P(E_{n_p})]^T = \iota^*[\P(E_{n_p})]^T$ if and only if $p < n-m$.
If $\mathcal{Q}_{\A,\B}=\emptyset$ and $p < n-m$, it follows that
$N^{\B}_{\A,p} = \int^T_{Q}[Z_{\nu,\nu}]^{T} \cdot \iota^*[\P(E_{n_p})]^T
=F_D\left(N^\nu_{\nu,p'}(X')\right)$,
as in the proof of Case 1 of Theorem \ref{T:type_b_reduction}.
Finally, if $\mathcal{Q}_{\A,\B} \neq \emptyset$, the proof is identical to
the proof of Case 2 of Theorem \ref{T:type_b_reduction} (simply replace $F_B$ with $F_D$, $2n$ with $2n-1$, $p \leq n-m$ with $p < n-m$, and $p > n-m$ with $p \geq n-m$).
\end{proof}

\comment{
\begin{proof}[Proof of Theorem \ref{T:type_d_reduction}]
Due to Proposition \ref{P:move_to_projective_space}, we can assume $\A \to \B$  and $|\B| \leq |\A| + p$.
By \cite[Propostion 5.1]{BKT2}, we have $\dim(Y_{\A,\B}) + m = \dim(Z_{\A,\B})+1$.  Since
$\dim(Y_{\A,\B}) = |\B| - |\A|$ and $m' = \dim(Z_{\A,\B})+1$ (resp. $m' = \dim(Z_{\A,\B})+2$ if $\mathcal{Q}_{\A,\B} \neq \emptyset$ and $p \geq n-m$), it follows that
\begin{equation*}
p'=
\begin{cases}
m+p-m'+1 &\text{ if } \mathcal{Q}_{\A,\B} \neq \emptyset \text{ and } p \geq n-m,\\
m+p-m' &\text{ otherwise.}\\
\end{cases}
\end{equation*}

If $\mathcal{Q}_{\A,\B}=\emptyset$, then we are done by Lemma \ref{L:restriction_coef_of_same_Lie_type}. We therefore assume $\mathcal{Q}_{\A,\B}\neq \emptyset$, and let $Q := IG_\omega(1,2n)$ denote the $(2n-2)$ dimensional quadric, with the inclusion map $\iota:Q \hookrightarrow \P^{2n-1}$.
Since $\P(E_{n_p}) \subset Q$ for $p \geq n-m$, we have
\begin{equation*}
\iota_*[\P(E_{n_p}) \cap Q]^{T} =
\begin{cases}
[\P(E_{n_p})]^T \cdot [Q]^T  &\text{ if } p < n-m\\
[\P(E_{n_p})]^T  &\text{ if } p \geq n-m\\
\end{cases}
\end{equation*}
Note that $Z'_{\A,\B} \cap Q = Z_{\A,\B}$ and $\iota^*[Z'_{\A,\B}]^T = [Z_{\A,\B}]^T \in H^*_T(Q)$.
Applying Proposition \ref{P:move_to_projective_space}, followed by the projection formula,
we have
\begin{align*}
N^\B_{\A,p}(X) &= \int^T_{Q} [Z_{\A,\B}]^{T} \cdot [\P(E_{n_p}) \cap Q]^{T} \\
&=
\begin{cases}
\int^T_{\P^{2n-1}} [Z'_{\A,\B}]^{T} \cdot [\P(E_{n_p})]^T \cdot [Q]^T &\text{ if } p < n-m\\
\int^T_{\P^{2n-1}} [Z'_{\A,\B}]^{T} \cdot [\P(E_{n_p})]^T &\text{ if } p \geq n-m\\
\end{cases}\\
&=
\begin{cases}
\int^T_{\P^{2n-1}} [Z_{\A,\B}]^{T} \cdot [\P(E_{n_p})]^T &\text{ if } p < n-m\\
\int^T_{\P^{2n-1}} [Z'_{\A,\B}]^{T} \cdot [\P(E_{n_p})]^T &\text{ if } p \geq n-m\\
\end{cases}\\
&= \int^T_{\P^{2n-1}}\sum_{\mathcal{I} \subset \mathcal{Q'}_{\A,\B}} [Z_{\nu_\mathcal{I},\nu_\mathcal{I}}]^T \cdot [\P(E_{n_{p}})]^T \\
&= \sum_{\mathcal{I} \subset \mathcal{Q'}_{\A,\B}} F_D\left({N}^{\sC_{\mathcal{I}}}_{\sC_{\mathcal{I}},p'}(X')\right),
\end{align*}
where the subvarieties $Z_{\nu_\mathcal{I},\nu_\mathcal{I}}$ are defined as in \eqref{E:projections-new}.
The fact that $[Z_{\A,\B}]^{T}$ (resp. $[Z'_{\A,\B}]^{T}$ for $p \geq n-m$) is equal to $\sum_{\mathcal{I} \subset \mathcal{Q'}_{\A,\B}} [Z_{\nu_\mathcal{I},\nu_\mathcal{I}}]^T$ follows from Lemma \ref{L:type_C_degeneration}.
\end{proof}
}

In conclusion, the combination of Theorem \ref{T:type_d_reduction} with Lemma \ref{L:restriction_formula2} and Corollary \ref{C:bad_terms_do_not_occur} yields a manifestly positive type $D$ Pieri rule.
To see that the hypotheses of Corollary \ref{C:bad_terms_do_not_occur} are met, note that
if $I_1 \cap \nu_\mathcal{I}=\{n\}$, then $\mu_1 = n$, since $\mu_1 \in \nu_\mathcal{I}$ for any $\mathcal{I} \subset \mathcal{Q}_{\lambda,\mu}$.  Therefore $c$ is a cut for every $c \in [1,n]$, and hence $\mathcal{Q}_{\lambda,\mu} = \emptyset$.
Moreover, since $I_1 = [1,n]$, we have $2n+1-p'-m' = n$, so $p = n-m+1 > n-m$.
In this case, the Pieri rule does not make use of the specialization $F_D$, and positivity
follows from any one of the known positive formulas for the restriction coefficient $N^{\nu}_{\nu,p'}(OG(m',2n))$ (e.g. \cite{koku,AJS,Billey}).
\begin{example}\label{E:type_d_example}
 Let $X := OG(1,8)$,and suppose we wish to calculate
 $N^{\{1\}}_{\{2\},4}(X)$.  Note that $\#\mathcal{Q}_{\{2\},\{1\}} = \emptyset$,  and $p=4>3=n-m$.
 It follows that
 \begin{align*}
 N^{\{1\}}_{\{2\},4}(X) &= N^{\{1,2\}}_{\{1,2\},3}(OG(2,8)) \\
 &=(-t_1-t_2)\left((-t_4 - t_2)(t_4-t_2) + (-t_2-t_1)(-t_3-t_1)\right),
 \end{align*}
 where the final polynomial is computed using Anders Buch's Equivariant Schubert Calculator \cite{Buch_calculator}.  Interestingly,
 this type $D$ restriction coefficient is not the specialization of any type $A$ restriction coefficient
 (see Remark \ref{R:type_d_restrictions}).
\end{example}
\begin{remark}\label{R:additional_schubert_class_type_d}
  For the even orthogonal Grassmannian $OG(m, 2n)$, there is an additional special Schubert variety $\tilde{X}_{n-m} $ of codimension $n-m$ defined by
$\tilde{X}_{n-m} := \{\Sigma \in X: \dim(\Sigma \cap \langle \+e_1, \ldots, \+e_{n-1}, \+e_{n+1}\rangle) \geq 1\}$ \footnote{The definitions of $\tilde{X}_{n-m}$ and ${X}_{n-m}$ are reversed in \cite{BKT2} when $n$ is even.}. The type $D_n$ Dynkin diagram automorphism
induces an involution of $OG(m, 2n)$ that interchanges Schubert varieties of types $1$ and $2$, and maps $\tilde{X}_{n-m} $ to ${X}_{n-m}$. Multiplication  with $[\tilde{X}_{n-m}]^T$  is equivalent to   multiplication with $[{X}_{n-m}]^T$ in the following sense:  We denote by $\tilde{N}^\B_{\A,n-m}(X)$ the structure coefficients of   $[X_\lambda]^T\cdot [\tilde{X}_{n-m}]^T$. Then, it follows from \eqref{E:type_d_pieri_part1} and the involution of $OG(m, 2n)$ that
\begin{equation*}\label{E:type_d_pieri_part2}
\tilde{N}^\B_{\A,p}(X) =
\begin{cases}
\tilde{N}^{\nu}_{\nu,p'}(X') &\text{ if } \mathcal{Q}_{\A,\B} = \emptyset, \text{ and} \\
\sum_{\mathcal{I} \subset \mathcal{Q'}_{\tilde{\A},\tilde{\B}}} \tilde{F}_D\left({N}^{\tilde{\nu}_{\mathcal{I}}}_{\tilde{\nu}_{\mathcal{I}},p'}(X')\right) &\text{ otherwise};\\
\end{cases}
\end{equation*}
where $X'$ and $p'$ are defined as before, and
\begin{equation*}
\tilde{F}_D(\hat t_{i}):=
\begin{cases}
 {F}_D(\hat t_{2n+1-i}) &\text{ if } i \in \{n,n+1\},\text{ and} \\
 {F}_D(\hat t_{i}) &\text{ if } i\notin\{n, n+1\} \\
\end{cases}
\end{equation*}
is induced by $F_D$ and the involution.
We note that $\tilde F_D$ also sends all positive roots of type $A_{2n-1}$ to positive roots of type $D_n$ except for the simple root
  $\hat t_n-\hat t_{n+1}$. It follows from Corollary \ref{C:bad_terms_do_not_occur} that  the aforementioned formula for $\tilde{N}^\B_{\A,p}(X)$ is manifestly positive.
\end{remark}
\begin{remark}\label{R:type_d_restrictions}
Suppose $\mathcal{Q}_{\A,\B} = \emptyset$ and
$p \geq n-m$. The restriction coefficient
$N^\B_{\A,p} = N^{\nu}_{\nu,p'}(OG(m',2n))$ may not be the specialization (via $F_D$ or $\tilde{F}_D$)
of a type $A$ restriction coefficient from a Grassmannian $Gr(m'',2n)$ for any $1 \leq m'' \leq 2n-1$, even up to a factor of $2$ (in contrast to the type $B$ case; see Remark \ref{R:type_b_restrictions}).
We have verified several such examples by computer, including Example \ref{E:type_d_example}.
\end{remark}

\appendix

\section{Formula for Restrictions of Special Schubert Classes}\label{S:appendix}
In this appendix, we provide a  manifestly positive formula for the restriction of a special Schubert class in $H_T^*(Gr(m, N))$ to an arbitrary $T$-fixed point, stated in terms of Schubert symbols.  We derive this formula directly from the Atiyah-Bott-Berline-Verge integration formula, but it can also be deduced from other restriction formulas, such as \cite{buch_rimanyi, Billey}.
We thank Sushmita Venugopalan for her insight in proving the following key lemma.
\begin{lemma}\label{L:schur}
 Consider the polynomial ring $\Z[x_1,\ldots,x_r;y_1,\ldots,y_{p+r-1}]$.
We have the following algebraic identity:
\begin{equation}\label{E:schur}
 \sum_{j \in [1,r]} \frac{\prod_{i \in [1,p+r-1]}(y_i-x_j)}{\prod_{i\in[1,r]\setminus\{j\}}(x_i-x_j)}
 = \sum_{1 \leq c_1 < \ldots < c_p \leq p+r-1} \hphantom{a} \prod_{i=1}^p (y_{c_i}-x_{c_i-i+1}).
\end{equation}
\end{lemma}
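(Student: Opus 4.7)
My strategy is induction on $p+r$, using $y_{p+r-1}$ as a pivot variable. The key observation is that both sides of \eqref{E:schur} are polynomials of degree at most one in $y_{p+r-1}$, so it will be enough to show agreement at a single specialization of $y_{p+r-1}$ together with agreement of the coefficient of $y_{p+r-1}$.

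For the base cases I will treat $r = 1$ and $p = 0$ directly. When $r = 1$ the unique admissible sequence is $(c_1,\ldots,c_p) = (1,\ldots,p)$, so $c_i - i + 1 = 1$ throughout and both sides collapse to $\prod_{i=1}^p (y_i - x_1)$. When $p = 0$ the right-hand side is the empty product $1$; the left-hand side becomes $\sum_j P(x_j)/\prod_{i\neq j}(x_i - x_j)$ for $P(x) = \prod_{i=1}^{r-1}(y_i - x)$, which has degree $r-1$ and leading coefficient $(-1)^{r-1}$. The standard Lagrange identity $\sum_j P(x_j)/\prod_{i\neq j}(x_j - x_i) = [x^{r-1}]P(x)$ together with the sign relation $\prod_{i\neq j}(x_i - x_j) = (-1)^{r-1}\prod_{i\neq j}(x_j - x_i)$ then yields $(-1)^{r-1} \cdot (-1)^{r-1} = 1$.

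For the inductive step, assume $p \geq 1$, $r \geq 2$, and the identity for all smaller values of $p+r$. Specializing $y_{p+r-1} = x_r$ on the left, the $j = r$ summand vanishes; for $j < r$ the factor $(x_r - x_j)$ in the numerator cancels the identical factor present in the denominator $\prod_{i\in[1,r]\setminus\{j\}}(x_i - x_j)$, leaving exactly the left-hand side for parameters $(p, r-1)$ in variables $x_1,\ldots,x_{r-1}$ and $y_1,\ldots,y_{p+r-2}$. On the right a summand contains $y_{p+r-1}$ only when $c_p = p+r-1$, in which case $x_{c_p - p + 1} = x_r$ and the last factor vanishes after substitution; the surviving sum runs over $c_p \leq p+r-2$, matching the right-hand side for $(p, r-1)$. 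Extracting instead the coefficient of $y_{p+r-1}$: on the left we simply replace $(y_{p+r-1} - x_j)$ by $1$ in each summand, obtaining the left-hand side for $(p-1, r)$, while on the right only summands with $c_p = p+r-1$ contribute, and dropping their $i = p$ factor yields the right-hand side for $(p-1, r)$. Both reductions hit strictly smaller values of $p+r$, so the inductive hypothesis applies and the two linear polynomials in $y_{p+r-1}$ agree.

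The only real bookkeeping hurdle will be to verify that the substitution $y_{p+r-1} = x_r$ cancels cleanly to $\prod_{i\in[1,r-1]\setminus\{j\}}(x_i - x_j)$ in the denominator, and that the restricted right-hand sums are literally indexed by length-$p$ (resp.\ length-$(p-1)$) strictly increasing sequences in $[1,p+r-2]$. I expect no deeper obstacle: once these identifications are written out, the rest is linearity in $y_{p+r-1}$ and routine induction.
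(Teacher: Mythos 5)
Your proof is correct, and it takes a genuinely different route from the paper's. The paper identifies both sides of \eqref{E:schur} with the symmetric-function expression $\sum_{k=0}^{p} e_k(y_1,\ldots,y_{p+r-1})\,h_{p-k}(-x_1,\ldots,-x_r)$: the left-hand side is handled by expanding over a common Vandermonde denominator, recognizing Schur polynomials $s_{(p-k)}$, and invoking Jacobi--Trudi, while the right-hand side requires a careful injectivity argument showing that the monomials $\prod_j(-x_{\widehat{c}_j})$ arising from the sequences $\{c_i\}$ enumerate $h_{p-k}$ without repetition. Your argument avoids all of that machinery: since the denominators are free of the $y$'s, both sides are affine in $y_{p+r-1}$ over the fraction field in the remaining variables, and you correctly verify that evaluation at $y_{p+r-1}=x_r$ reduces both sides to the $(p,r-1)$ instance (the factor $(x_r-x_j)$ cancelling against the denominator on the left, the terms with $c_p=p+r-1$ dying on the right) while the coefficient of $y_{p+r-1}$ reduces both sides to the $(p-1,r)$ instance. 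The base cases $r=1$ and $p=0$ are handled correctly, the latter via Lagrange interpolation with the right sign bookkeeping, and the induction on $p+r$ is well-founded. What your approach buys is a shorter, more elementary and self-contained proof; what the paper's buys is the explicit identification of the common value with $\sum_k e_k h_{p-k}$, which carries some structural information beyond the bare identity.
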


\begin{proof}
We claim that both sides of \eqref{E:schur} are equal to
\begin{equation*}
\sum_{k=0}^p  e_k(y_1,\ldots,y_{p+r-1}) h_{p-k}(-x_1,\ldots,-x_r),
\end{equation*}
where $e_k(y_1,\ldots,y_{p+r-1})$ is an elementary symmetric polynomial of degree $k$, and $h_j(-x_1,\ldots,-x_r)$ is a complete homogeneous symmetric polynomial of degree $j$ for $j \geq 0$.  We set $h_j(-x_1,\ldots,-x_r) = 0$ for $j < 0$.

For any positive integer $k$, let $V(x_1,\ldots,x_k)$ denote the Vandermonde determinant $\prod_{1 \leq i < j \leq k}(x_j-x_i)$.
Combining fractions on the left-hand side of \eqref{E:schur}, we get
\begin{align*}
&\sum_{j \in [1,r]} \frac{(-1)^{j-1}(y_1-x_j)\cdots(y_{p+r-1}-x_j)V(x_1,\ldots,\widehat{x_j},\ldots,x_r)}{V(x_1,\ldots,x_r)}  = \\
&\sum_{\substack{\text{all sequences }\\ 1 \leq c_1 < \ldots < c_k \leq p+r-1 \\ \text{for } 0 \leq k \leq p+r-1}} \left(\sum_{j \in [1,r]} \frac{(y_{c_1}\mathinner{{\cdotp}{\cdotp}{\cdotp}} y_{c_k})(-1)^{j-1}(-x_j)^{p+r-1-k}V(x_1,\mathinner{{\ldotp}{\ldotp}{\ldotp}},\widehat{x_j},\mathinner{{\ldotp}{\ldotp}{\ldotp}},x_r)}{V(x_1,\mathinner{{\ldotp}{\ldotp}{\ldotp}},x_r)}\right).
\end{align*}
The coefficient in $\Z[x_1,\ldots,x_r]$ of the monomial $y_{c_1}\ldots y_{c_k}$ above is equal to
\[(-1)^{p-k}s_{(p-k)}(x_1,\ldots,x_r).\]
Here,
\[
s_{(p-k)}(x_1,\ldots,x_r) :=  \frac{\det \left[
\begin{matrix}
x_1^{p+r-1-k} & \dots & x_r^{p+r-1-k} \\
x_1^{r-2} & \dots & x_r^{r-2} \\
\vdots & \ddots & \vdots \\
x_1 & \dots & x_r \\
1 & \dots & 1
\end{matrix}
\right]}{(-1)^{r-1}V(x_1,\ldots,x_r)}
\]
\comment{
\[
a_{(p+r-1-k,r-2,\ldots,1,0)}(x_1,\ldots,x_r) :=
\det \left[
\begin{matrix}
x_1^{p+r-1-k} & \dots & x_r^{p+r-1-k} \\
x_1^{r-2} & \dots & x_r^{r-2} \\
\vdots & \ddots & \vdots \\
x_1 & \dots & x_r \\
1 & \dots & 1
\end{matrix}
\right].
\]}
is a Schur polynomial. By the Jacobi-Trudi formula, we have $s_{(p-k)}(x_1,\ldots,x_r)$ $=$ $h_{p-k}(x_1,\ldots,x_r)$ $=$ $(-1)^{p-k}h_{p-k}(-x_1,\ldots,-x_r)$ as desired.

To prove the equality from the right-hand side, we set some more notation.
For any positive integers $b \leq a$, let $\bigbrace{a}{b}$ denote
the set of strictly increasing subsequences $\{c_i\}^{b}_{i=1} \subset [1,a]$.
\comment{
Let  ${r\brack{p}}$ denote the set of \emph{weakly} increasing subsequences $\{c_i\}^{p}_{i=1} \subset [1,r]$.
There is a lexicographic total ordering $<$ on the sets  ${p+r-1\brace{p}}$ and ${r\brack{p}}$, both of which have cardinality $\binom{p+r-1}{p}$.  The unique  order-preserving bijection is given by  $\{c_i\}^{p}_{i=1} \mapsto \{c_i-i+1\}^{p}_{i=1}$.
}
Fix an integer $1 \leq k \leq p-1$ and a sequence $\{f_i\}^{k}_{i=1} \in \bigbrace{p+r-1}{k}$.
Let $\bigbrace{p+r-1}{p}'$ denote the elements of $\bigbrace{p+r-1}{p}$ containing $\{f_i\}^{k}_{i=1}$ as a subsequence.
For any $\{c_i\}^p_{i=1} \in \bigbrace{p+r-1}{p}'$, let $\{\widehat{c}_j\}^{p-k}_{j=1}$ denote
the \emph{weakly} increasing sequence $\{c_i-i+1: c_i \neq f_j \text{ for any } j\}$.
The coefficient in $\Z[x_1,\ldots,x_r]$ of the monomial $y_{f_1}\ldots y_{f_k}$ on the right-hand side of \eqref{E:schur}
is then
\begin{equation}\label{E:monomials}
 \sum_{\{c_i\} \in \bigbrace{p+r-1}{p}'} \left(\prod^{p-k}_{j=1}(-x_{\widehat{c}_j})\right).
\end{equation}
The set $\bigbrace{p+r-1}{p}'$ has cardinality $\binom{p+r-k-1}{p-k}$, exactly the number of degree $p-k$ monomials in $\Z[x_1,\ldots,x_r]$, up to scalar multiples. We claim that no monomial occurs more than once in \eqref{E:monomials}.
Suppose on the contrary that $\{\widehat{c}_j\}^{p-k}_{j=1} = \{\widehat{d}_j\}^{p-k}_{j=1}$
for some pair $\{c_i\}^p_{i=1} \neq \{d_i\}^p_{i=1} \in \bigbrace{p+r-1}{p}'$.
Then there exists a minimum integer $h$ such that $f_h = c_i = d_j$  for some $i \neq j$.
Assuming $i < j$, we have $d_i < d_j = c_i$. Since $d_i \neq f_l$ for any $l \in [1,k]$,
we have $\widehat{c}_{i-h+1} \neq \widehat{d}_{i-h+1}$, a contradiction.
It follows that \eqref{E:monomials} equals $h_{p-k}(-x_1,..., -x_r)$.
\end{proof}

Now we consider the Grassmannian $X = Gr(m,N)$, and compute the structure coefficient $N_{\nu, p}^\nu$ in the equivariant product $[X_\nu]^T\cdot[X_p]^T$ of equivariant Schubert classes. Here    $\sC=\{\sC_1<\cdots<\sC_m\}$ is a general Schubert symbol; $p\in \{1, \cdots, N-m\}$, and $X_p=X_{\sym_p}$ is labeled by   the special  Schubert symbol   $\sym_p=\{N+1-m-p, N+2-m, \cdots, N\}$. We can further assume  $\sC\leq \mathpzc{S}_p$, since  $N_{\nu, p}^\nu$ would vanish otherwise (see e.g. \cite{koku}).
 Let  $I_1 := [1, N-m-p+1]$,  $I_2 := [N-m-p+2, N]$ and $r := \#(I_1\cap \sC)$. It follows from  $\sC\leq \mathpzc{S}_p$ that $r \geq 1$ and $I_2 \setminus \sC$ consists of $p+r-1$ elements.  Write $I_1\cap \sC = \{a_1<\cdots<a_r\}$ and $I_2 \setminus \sC = \{b_1<\cdots<b_{p+r-1}\}$.  We have the following formula of $N_{\nu, p}^\nu$, which gives   the restriction of the special Schubert class $[X_p]^T$ to the $T$-fixed point corresponding to $\sC$.

\begin{lemma}\label{L:restriction_formula2}
The restriction coefficient $N^\sC_{\sC,p}$ is given by
\begin{equation}\label{E:restriction_formula2}
N^\sC_{\sC,p} = \sum_{1\leq c_1<\cdots<c_p\leq {p+r-1}} \hphantom{a} \prod_{i = 1}^p (t_{b_{(c_i)}} - t_{a_{(c_{i}-i+1)}}).
\end{equation}
\end{lemma}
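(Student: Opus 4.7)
The plan is to compute the restriction $N^\nu_{\nu,p} = \iota^*_{\Sigma_\nu}[X_p]^T$ by first turning it into the coefficient of a specific rational function via the identification $[X_p]^T = c_p^T(\mathcal{Q} - E_{n_p})$ noted earlier in the paper, then extracting that coefficient by partial fractions, and finally recognizing the result as the left-hand side of \eqref{E:schur}, so that Lemma \ref{L:schur} delivers the manifestly positive formula \eqref{E:restriction_formula2}.

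First, I would restrict $c^T(\mathcal{Q} - E_{n_p})$ to the $T$-fixed point $\Sigma_\nu$. Since $\mathcal{Q}|_{\Sigma_\nu} = \bigoplus_{i \notin \nu}\C_{\chi_i}$ and $E_{n_p}$ is the trivial bundle $\bigoplus_{i \in I_1}\C_{\chi_i}$ (recalling $n_p = N+1-m-p$ and $I_1 = [1,n_p]$), their equivariant Chern roots are $\{t_i : i \notin \nu\}$ and $\{t_i : i \in I_1\}$, respectively. The ratio $\prod_{i \notin \nu}(1+t_i z)/\prod_{i \in I_1}(1+t_i z)$, after canceling the factors indexed by $I_1 \setminus \nu$, simplifies to
\[
\frac{\prod_{j=1}^{p+r-1}(1+t_{b_j} z)}{\prod_{j=1}^{r}(1+t_{a_j} z)},
\]
so that $N^\nu_{\nu,p}$ is the coefficient of $z^p$ in this rational function.

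Next, setting $y_j := t_{b_j}$ and $x_j := t_{a_j}$, I would extract this coefficient by a partial fraction decomposition in $z$, treating the $x_j$'s as distinct formal variables. Because the numerator has degree $p + r - 1$ and the denominator has degree $r$, the decomposition takes the form $Q(z) + \sum_{j=1}^r B_j/(1+x_j z)$, where $\deg Q = p-1$ and $B_j = \prod_i(1 - y_i/x_j)/\prod_{k \neq j}(1 - x_k/x_j)$. Only the partial-fraction sum contributes to the coefficient of $z^p$, which equals $\sum_j B_j(-x_j)^p$. Clearing denominators in each $B_j$ by pulling out the appropriate powers of $x_j$ and collecting the signs then yields
\[
N^\nu_{\nu,p} \;=\; \sum_{j=1}^r \frac{\prod_{i=1}^{p+r-1}(y_i - x_j)}{\prod_{k \in [1,r]\setminus\{j\}}(x_k - x_j)},
\]
which is precisely the left-hand side of \eqref{E:schur}. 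Applying Lemma \ref{L:schur} and re-substituting $y_j = t_{b_j}$, $x_j = t_{a_j}$ immediately produces \eqref{E:restriction_formula2}.

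I expect the main technical obstacle to be the sign bookkeeping in the partial fraction step: the factor $(-x_j)^p$, the powers of $x_j$ pulled out of the numerator and denominator, and the sign flips arising from rewriting $\prod(1 - y_i/x_j)$ and $\prod(1 - x_k/x_j)$ as $\prod(y_i - x_j)$ and $\prod(x_k - x_j)$ all have to combine to give an overall sign of $+1$. Once the left-hand side of \eqref{E:schur} is recognized, the combinatorial content of the formula is entirely transferred to Lemma \ref{L:schur}, which is established independently by Jacobi--Trudi-style manipulations with Schur polynomials.
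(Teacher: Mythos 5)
Your proposal is correct, and it reaches the same key intermediate expression as the paper — the left-hand side of \eqref{E:schur} — but by a genuinely different route. The paper never touches the Chern-class description of $[X_p]^T$: it uses Proposition \ref{P:move_to_projective_space} to rewrite $N^\nu_{\nu,p}$ as $\int^T_{\P^{N-1}}[Z_{\nu,\nu}]^T\cdot[\P(E_{N-m-p+1})]^T$ and then applies the Atiyah--Bott--Berline--Vergne localization formula on $\P^{N-1}$, where the fixed-point restrictions are supported only on $j\in I_1\cap\nu$; after cancellation the localization sum is exactly $\sum_{j}\prod_i(t_{b_i}-t_{a_j})/\prod_{k\neq j}(t_{a_k}-t_{a_j})$. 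You instead restrict the Thom--Porteous representative $c_p^T(\mathcal{Q}-E_{n_p})$ directly to $\Sigma_\nu$ and extract the $z^p$-coefficient of the resulting rational function by partial fractions; your sign bookkeeping is right (the factor $(-x_j)^p$ cancels against the $x_j^{-p}$ left after clearing denominators, and the remaining $(-1)^{p+r-1}/(-1)^{r-1}$ cancels the $(-1)^p$), so you land on the identical expression. What each approach buys: yours avoids ABBV and the projection to $\P^{N-1}$ entirely, making the computation a one-line fixed-point restriction plus elementary generating-function algebra; its only external input is the identity $[X_p]^T=c_p^T(\mathcal{Q}-E_{n_p})$, which the paper asserts in Section \ref{S:cohomology} as a remark without proof (it is the standard equivariant Kempf--Laksov/Thom--Porteous formula for a single Schubert condition, so this is a legitimate but unproved-in-the-paper dependency). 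The paper's route stays consistent with the machinery it has already built (Proposition \ref{P:move_to_projective_space}), which is also what powers the type $B$, $C$, $D$ arguments. Either way, the combinatorial content is delegated to Lemma \ref{L:schur}, exactly as you say.
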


\begin{proof}
Let $\mathcal{A} := [Z_{\nu,\nu}]^T \cdot [\P(E_{N-m-p+1})]^T \in H_T^*(\P^{N-1})$.
By Proposition \ref{P:move_to_projective_space} we have $N^{\nu}_{\nu,p} = \int^T_{\P^{N-1}}\mathcal{A}$.
For $1 \leq j \leq N$, let $\iota^*_j:H_T^*(\P^{N-1}) \to H_T^*\left(\P(\langle \+e_j \rangle)\right)$ denote the restriction to the $T$-fixed point of $\P^{N-1}$ corresponding to the $j$-th basis vector.
Note that
$\iota^*_j \mathcal{A} = 0$ unless $j \in I_1 \cap \nu$, in which case we have
$\iota^*_j \mathcal{A} = \prod_{i \not\in \nu}(t_i-t_j)\prod_{i\in I_2}(t_i-t_j)$.
By the Atiyah-Bott-Berline-Vergne integration formula (see e.g. \cite[\S 2.5]{anderson_notes}), we therefore have
\begin{align*}
\int_{\P^{N-1}}^T\mathcal{A} &= \sum^{N}_{j=1} \frac{\iota^*_j \mathcal{A}}{\prod_{i \in [1,N]\setminus\{j\}}(t_i - t_j)} \\
&=\sum_{j \in I_1 \cap \nu} \frac{\prod_{i \in I_1 \setminus \nu}(t_i-t_j)\prod_{i \in I_2 \setminus \nu}(t_i-t_j)^2\prod_{i \in I_2 \cap \nu}(t_i-t_j)}{\prod_{i\in [1,N] \setminus\{j\}}(t_i-t_j)} \\
&=\sum_{j \in I_1 \cap \nu} \frac{\prod_{i \in I_2 \setminus \nu}(t_i-t_j)}{\prod_{i\in I_1 \cap \nu \setminus\{j\}}(t_i-t_j)} \\
&=\sum_{j \in [1,r]} \frac{\prod_{i \in [1,p+r-1]}(t_{b_i}-t_{a_j})}{\prod_{i\in[1,r]\setminus\{j\}}(t_{a_i}-t_{a_j})}.
\end{align*}
Lemma \ref{L:schur} yields the result, after identifying $y_i$ with $t_{b_i}$ for $1 \leq i \leq p+r-1$, and $x_i$ with $t_{a_i}$ for $1 \leq i \leq r$.
\end{proof}

We observe that Lemma \ref{L:restriction_formula2} is manifestly positive in the sense that the terms $(t_{b_{c_i}} - t_{a_{c_{i}-i+1}})$ are elements of $\Z_{\geq0}[t_2-t_1, \ldots, t_{N}-t_{N-1}]$.
Specializations of these terms yield manifestly positive Pieri rules in types $C$ and $B$ (Theorems \ref{T:type_c_reduction} and \ref{T:type_b_reduction}).
When $N=2n$, the specialization $F_D$ sends all positive roots $t_a-t_b$ (where $a<b$) to positive roots of type $D_n$ except for the simple root $t_n-t_{n+1}$ of type $A_{2n-1}$. Nevertheless,  the following corollary ensures the type $D$ Pieri rule (Theorem \ref{T:type_d_reduction}) is manifestly positive as well.

\begin{cor}\label{C:bad_terms_do_not_occur}
If $N=2n$, then none of the terms $(t_{b_{c_i}} - t_{a_{c_{i}-i+1}})$ in the summation \eqref{E:restriction_formula2} are given by
$(t_{n+1}-t_n)$ unless $I_1 = [1,n]$ and $I_1\cap \sC = \{n\}$.
\end{cor}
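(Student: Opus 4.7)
The plan is to start from the hypothesis that some term in the sum \eqref{E:restriction_formula2} equals $t_{n+1}-t_n$, and derive both conclusions (that $I_1=[1,n]$ and that $I_1\cap\sC=\{n\}$) by carefully tracking which positions in $\{a_j\}$ and $\{b_j\}$ the values $n$ and $n+1$ can occupy. So suppose some index $i\in[1,p]$ in the sequence $1\leq c_1<\cdots<c_p\leq p+r-1$ produces the term $(t_{b_{c_i}}-t_{a_{c_i-i+1}})=t_{n+1}-t_n$, i.e.\ $b_{c_i}=n+1$ and $a_{c_i-i+1}=n$.

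Next I would establish $I_1=[1,n]$ by membership conditions alone. Since $a_{c_i-i+1}=n$ lies in $I_1=[1,2n-m-p+1]$, we need $n\leq 2n-m-p+1$, i.e.\ $p\leq n-m+1$. Since $b_{c_i}=n+1$ lies in $I_2=[2n-m-p+2,2n]$, we need $n+1\geq 2n-m-p+2$, i.e.\ $p\geq n-m+1$. Equality gives $p=n-m+1$, so $I_1=[1,n]$ and $I_2=[n+1,2n]$.

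Now I would pin down that $r=1$ and $a_1=n$. Because $n+1$ is the minimum of $I_2$ and $b_1<\cdots<b_{p+r-1}$ is strictly increasing, the relation $b_{c_i}=n+1$ forces $c_i=1$. Since $c_1<\cdots<c_p$, the only index $i$ with $c_i=1$ is $i=1$. Therefore $c_i-i+1=1$, so $a_1=n$. But $n$ is the maximum of $I_1=[1,n]$, and $a_1<\cdots<a_r$ is strictly increasing inside $I_1$, so having $a_1=n$ forces $r=1$. Hence $I_1\cap\sC=\{a_1\}=\{n\}$, which is exactly the exceptional case allowed in the statement.

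The argument is a short piece of careful index bookkeeping rather than a deep calculation; the only mild subtlety—and the step that genuinely matters for the positivity of the type $D$ Pieri rule—is the observation that the ``problematic'' term $t_{n+1}-t_n$ is produced only when both $n+1$ and $n$ are forced to sit at the extreme ends of the respective sequences $\{b_j\}$ and $\{a_j\}$, which rigidly determines both $I_1$ and $I_1\cap\sC$. Once this corollary is in place, the earlier remarks immediately imply that the specializations $F_D$ and $\tilde F_D$ never encounter the simple root $\hat t_n-\hat t_{n+1}$ in any Pieri computation outside the excluded case, so Graham-positivity is preserved.
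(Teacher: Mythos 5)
Your proof is correct and follows essentially the same route as the paper's: both arguments reduce to the observation that $a_{c_i-i+1}=n$ and $b_{c_i}=n+1$ force $n$ and $n+1$ to sit at the extreme ends of $I_1$ and $I_2$ respectively, whence $c_i=1$, $i=1$, $r=1$, and $I_1\cap\sC=\{n\}$. The paper compresses this into the single chain $n=a_{c_i-i+1}\leq a_r<b_1\leq b_{c_i}=n+1$, while you separately extract $I_1=[1,n]$ from the membership conditions first; the content is the same.
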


\begin{proof}
If the term $(t_{n+1}-t_n)$ occurs, then there exits $1\leq c_1<\cdots < c_p\leq p+r-1$ and $1\leq i\leq p$ such that $t_{b_{(c_i)}} - t_{a_{(c_{i}-i+1)}}=t_{n+1}-t_n$. It follows that $n={a_{(c_{i}-i+1)}}\leq a_r<b_1\leq b_{(c_i)}=n+1$. Hence, we have
 $c_i=1$, $r=1, {b_1} = n+1$, and ${a_1} = n$.
\end{proof}

\end{document}